\newcommand{\mmp}{\mathbb{P}}
\newcommand{\od}{\overset{d}{=}}
\newcommand{\dod}{\overset{d}{\to}}
\newcommand{\tp}{\overset{P}{\to}}
\newcommand{\me}{\mathbb{E}}
\newcommand{\mr}{\mathbb{R}}
\newcommand{\mn}{\mathbb{N}}
\newcommand{\lin}{\underset{n\to\infty}{\lim}}
\newcommand{\lix}{\underset{x\to\infty}{\lim}}
\newcommand{\lit}{\underset{t\to\infty}{\lim}}
\newtheorem{thm}{Theorem}[section]
\newtheorem{lemma}[thm]{Lemma}
\newtheorem{assertion}[thm]{Proposition}
\theoremstyle{definition}
\theoremstyle{remark}
\newtheorem{rem}[thm]{Remark}
\begin{document}
\title{Regenerative compositions in the case of slow variation:
 A renewal theory approach}
\date{\today}

\author{Alexander Gnedin\footnote{School of Mathematical Sciences, Queen Mary University of London,
 e-mail: a.gnedin@qmul.ac.uk}~~~and~~
Alexander Iksanov\footnote{ Faculty of Cybernetics, National T.\
Shevchenko University of Kiev, 01033 Kiev, Ukraine, e-mail:
iksan@univ.kiev.ua}}

\maketitle

\begin{abstract}
\noindent A regenerative composition structure is a
sequence of ordered partitions derived from the range of a
subordinator by a natural sampling procedure.
In
this paper, we extend previous studies \cite{GBarbour,  GIM,
GnePitYor1} on  the asymptotics of the number of blocks $K_n$ in
the composition of integer $n$, in the case when the L{\'e}vy
measure of the subordinator has a property of slow variation at
$0$. Using tools from the renewal theory the limit laws for $K_n$
are obtained in terms of integrals involving the Brownian motion
or stable processes. In other words, the limit laws are either
normal or other stable distributions, depending on the
behavior of the tail of L{\'e}vy measure at $\infty$.
Similar results are also derived for the number of singleton
blocks.
\end{abstract}
\noindent Keywords: first passage time, number of blocks,
regenerative composition, renewal theory, weak convergence

\noindent 2010 Mathematics Subject Classification: 60F05, 60K05,
60C05

\section{Introduction}

Let $S:=(S(t))_{t\geq 0}$ be a subordinator (i.e. an increasing L{\'e}vy process) with $S(0)=0$, zero drift, no killing and a nonzero L{\'e}vy measure $\nu$
on ${\mathbb R}_+$.
The closed range $\cal R$ of the process $S$ is a regenerative subset of ${\mathbb R}_+$ of zero Lebesgue measure.
The range $\cal R$
splits the positive halfline in infinitely many disjoint component intervals that form an open set
$(0,\infty)\setminus {\cal R}$. These component intervals, further called gaps, are associated with jumps of $S$.
Let $E_1,\ldots, E_n$ be a sample drawn  independently of $S$ from the standard exponential distribution.
Each sample point $E_j$ falls in
the generic gap $(a,b)$
with probability $e^{-a}-e^{-b}$.
A gap is said to be occupied if it contains at least one of $n$ sample points.
The sequence of positive occupancy numbers of the gaps, recorded in the natural order of the gaps,
 is a composition (ordered partition) ${\cal C}_n$ of integer $n$.
The number $K_n$ counting the blocks of the composition is equal to the number of gaps occupied
by at least one sample point,
and
the number $K_{n,r}$ counting the blocks of size $r$ is the number of gaps occupied by exactly $r$ out of $n$ sample points,
so that $K_n=\sum_{r=1}^n K_{n,r}$ and $n=\sum_{r=1}^n r K_{n,r}$.

The sequence of random compositions  $({\cal C}_n)_{n\in{\mathbb N}}$ derived in this way has the following two recursive properties.
The first property of {\it sampling consistency} is a form of exchangeability:
removing a randomly chosen sample point from the first $n$ points maps ${\cal C}_n$ in a distributional copy of ${\cal C}_{n-1}$.
This property is obvious from the construction and exchangeability, because removing a random point has the same effect as restricting to $n-1$ points $E_1,\dots,E_{n-1}$.
The second property is specific for regenerative ${\cal R}$.
Consider  composition ${\cal C}_n$ of size $n$ and suppose it occurs that the first part,  which is the number of sample points in the leftmost occupied gap,
is some $m<n$, then deleting this part yields a composition on  $n-m$ remaining points which is a distributional copy of  ${\cal C}_{n-m}$.
This property is a combinatorial counterpart of the regenerative property of $\cal R$, therefore
sequences $({\cal C}_n)_{n\in{\mathbb N}}$ are called
{\it regenerative composition structures} \cite{RCS}.
In particular, for suitable choice of $\nu$ the construction generates an ordered version of the familiar Ewens-Pitman two-parameter partition structure \cite{RCS}.

The regenerative composition structures  appear in a variety of contexts related to partition-valued processes and random discrete distributions
(see \cite{RegenCS} for a survey).
To bring the construction of compositions in a more conventional context we may consider a random distribution function
$F(t)=1-\exp(-S(t))$  on positive reals, also known as a {\it neutral to the right prior} \cite{Walker}.
Since $F$ has atoms, an independent $n$-sample from the distribution (defined conditionally given $F$) will have clusters of repeated values, thus we may
define a composition  ${\cal C}_n$ by recording the multiplicities in the order of increase of the values represented in the sample.

Limit distributions for $K_n$ (properly centered and normalized) were studied under various assumptions on $S$.
When the L{\'e}vy measure $\nu$ is finite, the process $S$ is  compound Poisson, and $\cal R$ is the discrete set of atoms of a renewal point process.
 A characteristic
feature of this case is that almost all of the gaps within $[0, S(\log n)]$ are occupied, and these give a dominating contribution to $K_n$.
In the compound Poisson case there is a rather complete theory \cite{Gne, GIM, GINR}  surveyed in \cite{GIM2}.

In the case of infinite L{\'e}vy measure the asymptotic behaviour of $K_n$ is related to that  of the tail
$\nu[x,\infty)$ as $x\to 0$;
concrete results for infinite $\nu$ have been obtained under the assumption of regular variation.
If $\nu[x,\infty)$ varies regularly at $0$  with positive index, both $K_n$ and $K_{n,1}$ may be
normalized by the same constant (no centering required) to entail  convergence to multiples of
the same random variable,
which may be represented as  the exponential functional of a subordinator  \cite{GnePitYor2}.
This case is relatively easy, because the number of occupied gaps within the partial range in $[S(t_1),S(t_2)]$ is of the same order of growth, as $n\to\infty$, for every time interval $0\leq t_1<t_2\leq\infty$.

The case of infinite L{\'e}vy measure with $\nu[x,\infty)$ slowly varying at $0$ is much more delicate,
because the occupied gaps do not occur that uniformly as in the case of regular variation with positive index,
nor the primitive gap-counting works: unlike the compound Poisson case, $\cal R$  has topology of a Cantor set.
Each ${\mathbb E}K_{n,r}$
is then of the order of growth smaller than that of ${\mathbb E}K_n$,
and the convergence of $K_n$ and $K_{n,r}$'s  requires nontrivial centering.
Normal limits were shown in \cite{GnePitYor1}
in the special case of subordinators which, like the gamma subordinators, have
$\nu[x,\infty)$  of logarithmic growth
as $x\to 0$.
Normal limits for $K_n$ for wider families of slowly varying functions were obtained in \cite{GBarbour} under
the assumption that the subordinator
has finite variance and the Laplace exponent of $S$ satisfies certain smoothness and growth conditions.
It turned that the case of slow variation required a further division, with qualitatively different scaling functions in each subcase \cite{GBarbour}.

The method of   \cite{GBarbour} relied on linearization of
the compensator process
for the number of occupied gaps contained in $[0,S(t)]$,
and application of the functional central limit theorem for $S$.
In this paper we develop a different approach to the asymptotics
of $K_n$ in the case of $\nu[x,\infty)$ slowly varying at $0$.
As in \cite{GBarbour}, we analyse the compensator process, but instead of linearizing it, apply
the renewal theory and functional limit theorems for the
{\it first passage times} process, that is random function inverse to $S$.
This gives a big technical advantage, enabling us
to simplify arguments and  to increase generality.
The class of slowly
varying functions covered in this paper will be larger than that
in \cite{GBarbour, GnePitYor1}.
In particular, we will
omit the  assumption of finite variance of $S$ and find conditions
on the L{\'e}vy measure $\nu$ to guarantee a weak convergence of $K_n$
to the normal or some other stable distributions.
A similar approach, with a discrete-time version of the compensator will be applied also
in the case of finite L{\'e}vy
measure, leading to
 known asymptotics \cite{Gne, GIM, GINR} in a
more compact way. We shall also identify the limit distribution
for $K_{n,1}$ in terms of an integral involving a random process
corresponding to the limit law of $K_n$. With some additional
effort, our approach to the limit laws of $K_{n,1}$ could be
extended to $K_{n,r}$ for all $r\geq 1$,
but to avoid technical complications
we do not pursue this extension here, as our main focus is the
development of the new  method.

\section{Preliminaries}

As in much of the previous work, it will be convenient to
poissonize the occupancy model, that is to replace the exponential
sample $E_1,\dots,E_n$ of fixed size $n$ by atoms of an
inhomogeneous Poisson  process $(\pi_t(x))_{x\geq 0}$, which is
independent of $S$ and has the intensity measure $\lambda_t({\rm
d}x)=te^{-x}{\rm d}x$ on ${\mathbb R}_+$. The total number of
atoms, $\pi_t:=\pi_t(\infty)$, has then the Poisson distribution
with mean $t$. We will use the notation $K(t):=K_{\pi_t}$ for the
number of gaps occupied by at least one atom of the Poisson
process, and $K(t,r)$  for the number of gaps occupied by exactly
$r$ such atoms.

Introduce
$$\Phi(t):=\int_{[0,\,\infty)}\big(1-\exp\{-t(1-e^{-x})\}\big)\nu({\rm  d}x), \ \ t>0.$$
By Proposition 2.1 of \cite{GBarbour}  the increasing process
$(A(t, u))_{u\in[0,\infty]}$ defined by
$$A(t,u):=\int_{[0,\,u]}\Phi\big(te^{-S(v)}\big){\rm d}v$$
is the compensator of the increasing process which counts the
number of gaps in  $[0,S(u)]\setminus {\cal R}$, that are occupied
by at least one atom of the Poisson sample. Similarly, one can
check that
$$A^{(r)}(t, u):=\int_{[0,\,u]} \Phi^{(r)}\big(te^{-S(v)}\big){(te^{-S(v)})^r\over r!} {\rm d}v,$$
where $\Phi^{(r)}$ denotes the $r$th derivative of $\Phi$, is the
compensator of the increasing process which counts the number of
gaps in $[0,S(u)]\setminus {\cal R}$ that contain exactly $r$
Poisson atoms.

The asymptotics of $K(t)$ and $K(t,1)$ for large $t$ is closely
related to the terminal values of the compensators
\begin{equation}\label{co}
A(t):=A(t,\infty)=\int_{[0,\,\infty)}\Phi(te^{-S(v)}){\rm
d}v=\int_{[0,\,\infty)}\Phi(te^{-s})\,{\rm d}T(s)
\end{equation}
and
\begin{equation}\label{co1} A^{(1)}(t):=A^{(1)}(t,\infty)=\int_{[0,\infty)}\Phi^\prime(te^{-S(v)})te^{-S(v)}\,{\rm d}v,
\end{equation}
where $$T(s):=\inf\{t\geq 0: S(t)>s\}, \ \ s\geq 0$$
is the passage  time of $S$ through level $s$.

Like in many other models of allocating `balls' in `boxes' with random probabilities of `boxes',
the variability of $K_n$
has two sources: the randomness of $\cal R$, and the randomness involved in drawing a sample
conditionally given $\cal R$.
For regenerative compositions it has been shown, in various forms, that the first factor of variability
has a dominant role.
See, for instance, \cite{GIM} for the compound Poisson case.
We shall confirm the phenomenon in the case of slow variation
by showing that  $A(n)$ absorbs a dominant part of the variability, to the extent that
$A(n)$
and $K_n$, normalized and centered by the same
constants, have the same limiting distributions.

Throughout we shall assume  that the
function $\Phi$ satisfies one of the following three
conditions:
\newline {\sc Condition A}:
\begin{equation*}
\varphi(t):=\Phi(e^t) \ \sim \ t^\beta L_1(t), \ \ t \to \infty,
\end{equation*}
for some $\beta\in [0,\infty)$ and some function $L_1$ slowly varying at
$\infty$.  For $\beta=0$ we assume $\lit L_1(t)=\infty$.
\newline {\sc
Condition B}: $\varphi(t)$ belongs to de Haan's class $\Gamma$,
i.e., there exists a measurable function $h:\mr\to (0,\infty)$
called the {\it auxiliary function} of $\varphi$ such that
$$\underset{t\to\infty}{\lim}{\varphi(t-uh(t))\over \varphi(t)}=e^{-u} \ \ \text{for all} \ u\in\mr
{\rm ~~~and~~~} \lit h(t)=\infty.
$$
\newline {\sc
Condition C}: $\varphi(t)$ is a bounded function,
which holds if and only if  the L{\'e}vy measure $\nu$ is finite, {\it i.e.}
 $S$ is a compound Poisson process.

Let
$$\widehat{\Phi}(t):= \int_0^\infty (1-e^{-tx})\nu({\rm d}x)$$
denote the conventional Laplace exponent of $S$. Then
$$\Phi(t)\sim \widehat{\Phi}(t), \ \ t\to\infty.$$ Therefore, conditions
A, B and C can be equivalently formulated with $\widehat{\Phi}$ in
place of $\Phi$. Either of the conditions implies that
$\Phi$ is a function slowly varying at $\infty$, hence
by Karamata's Tauberian theorem
\begin{equation}\label{2} \nu[x,\infty] \ \sim \ \Phi(1/x), \ \ x\to 0.
\end{equation}


It should be noted that there are slowly varying functions which
satisfy neither of the conditions A, B and C, for instance
functions which behave like
$$\Phi(t)\sim
\exp\left(\int_2^t {|\sin u|\over \log u}{\rm d}u\right), ~~~t\to\infty.$$
However, functions that are not covered by one of the
conditions A, B and C are rather exceptional.
The case $\beta=0$ of Condition A covers the functions called in \cite{GBarbour}
`slowly growing', and the case $0<\beta <\infty$ `moderately growing'.

The rest of the paper is organized as follows. In Section
\ref{gen} we consider the case of infinite $\nu$ when one of the
conditions A or B holds, and derive the limit
distributions for $K_n$ and $K_{n,1}$. In Section \ref{com} we give
a simplified treatment of  the much studied case \cite{Gne,
GINR, GIM} when $\nu$ is a finite measure, that is Condition C
holds. Finally,  some auxiliary facts are collected in the Appendix.

\section{Subordinators with infinite L{\'e}vy measure}\label{gen}

\subsection{Convergence of $K_n$}

Our first main result concerns `slowly growing' or `moderately
growing' functions $\Phi$ of slow variation, behaving like e.g.
$\Phi(t)\sim
(\log_k
t)^\beta(\log_m t)^\delta$, for some $\beta>0$, $\delta\geq 0$,
$k\in\mn$, $m\in\mn_0$, where $\log_i x$ denotes the $i$-fold
iteration of the natural logarithm.

Introduce the moments of $S(1)$
$${\tt s}^2:={\rm Var}\,S(1)=\int_{[0,\,\infty)}x^2\nu({\rm d}x), ~~~{\tt m}:={\mathbb E}S(1)=
\int_{[0,\,\infty)} x \nu({\rm d}x).$$ Note that ${\tt m}<\infty$
under the assumptions of all the subsequent theorems.
\begin{thm}\label{main}
Suppose Condition A holds.
\newline {\rm (a)}
Suppose ${\tt s}^2<\infty$. If $\beta>0$ then $${K_n-{\tt
m}^{-1}\int_{[1,\,n]}y^{-1}\Phi(y){\rm d}y\over \sqrt{{\tt
s}^2{\tt m}^{-3}\log n}\Phi(n)} \ \dod \ \beta
\int_{[0,1]}Z(1-y)y^{\beta-1}{\rm d}y, \ \ n\to\infty,$$ where
$\big(Z(y)\big)_{y\in[0,1]}$ is the Brownian motion, and if
$\beta=0$ then the limiting random variable is $Z(1)$. 
\newline {\rm (b)} Suppose ${\tt s}^2=\infty$ and
$$\int_0^x y^2 \nu({\rm d}y) \ \sim \ L(x), \ \ x\to\infty,$$ for some $L$ slowly varying at
$\infty$. Let $c(x)$ be any positive function satisfying
$\lix\,xL(c(x))/c^2(x)=1$. If $\beta>0$ then
$${K_n-{\tt m}^{-1}\int_{[1,\,n]}y^{-1}\Phi(y){\rm d}y\over
{\tt m}^{-3/2}c(\log n)\Phi(n)} \ \dod \ \beta
\int_{[0,1]}Z(1-y)y^{\beta-1}{\rm d}y, \ \ n\to\infty,$$ where
$(Z(y))_{y\in[0,1]}$ is the Brownian motion, and if $\beta=0$ then
the limiting random variable is $Z(1)$.
\newline {\rm (c)} Suppose
\begin{equation}\label{domain1}
\nu[x,\infty] \ \sim \ x^{-\alpha}L(x), \ \ x\to \infty,
\end{equation}
for some $L$ slowly varying at $\infty$ and $\alpha\in (1,2)$. Let
$c(x)$ be any positive function satisfying $\lix
\,xL(c(x))/c^\alpha(x)=1$. If $\beta>0$ then
$${K_n-{\tt m}^{-1}\int_{[1,\,n]}y^{-1}\Phi(y){\rm d}y\over
{\tt m}^{-(\alpha+1)/\alpha}c(\log n)\Phi(n)} \ \dod \ \beta
\int_{[0,1]}Z(1-y)y^{\beta-1}{\rm d}y, \ \ n\to\infty,$$ where
$\big(Z(y)\big)_{y\in [0,1]}$ is the $\alpha$-stable L\'{e}vy
process such that $Z(1)$ has characteristic function
\begin{equation}\label{st1}
u\mapsto \exp\{-|u|^\alpha
\Gamma(1-\alpha)(\cos(\pi\alpha/2)+i\sin(\pi\alpha/2)\, {\rm
sgn}(u))\}, \ u\in\mr,
\end{equation}
and if $\beta=0$ then the limiting random variable is $Z(1)$.
\end{thm}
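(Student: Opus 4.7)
The strategy proceeds in two stages: first, reduce $K_n$ to the compensator $A(n)$ defined in (\ref{co}); second, derive the limit law of $A(n)$ from a functional invariance principle for the first-passage process $T$. For the first stage, I would Poissonize, replacing $K_n$ by $K(n)$ at a cost negligible for the claimed normalizations, and observe that $K(n) - A(n)$ is a purely discontinuous martingale in the filtration obtained by scanning the gaps of $\cal R$ from left to right; its predictable quadratic variation is essentially $A^{(1)}(n)$. Under Condition A one expects $\me\,A^{(1)}(n) = O(\Phi(n))$, of smaller order than the square of the claimed normalization $\varphi(\log n)\,c_n$ (the relevant $c_n\to\infty$ in each of (a), (b), (c)), so the martingale contribution vanishes after scaling, and it suffices to prove the stated weak convergence with $A(n)$ in place of $K_n$.

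Next, rewrite $A(n) = \int_0^\infty \varphi(\log n - s)\,{\rm d}T(s)$ up to a tail contribution from $s > \log n$ which is negligible because $\varphi$ is bounded there. The elementary renewal theorem $T(s)/s\to 1/{\tt m}$ identifies the deterministic centering as ${\tt m}^{-1}\int_0^{\log n}\varphi(u)\,{\rm d}u = {\tt m}^{-1}\int_1^n y^{-1}\Phi(y)\,{\rm d}y + O(1)$, matching the centering in the theorem. Under each of the moment conditions in (a), (b), (c), classical invariance principles for $S$ transfer by inversion to $T$: with $c_n$ equal respectively to $\sqrt{{\tt s}^2\log n/{\tt m}^3}$, ${\tt m}^{-3/2}c(\log n)$, or ${\tt m}^{-(\alpha+1)/\alpha}c(\log n)$, the rescaled process $\widetilde{T}_n(y) := \bigl(T(\log n \cdot y) - \log n\cdot y/{\tt m}\bigr)/c_n$ converges weakly on $D[0,1]$ with the Skorokhod $J_1$ topology to the Brownian motion in cases (a), (b) and to the $\alpha$-stable L\'evy process with characteristic function (\ref{st1}) in case (c).

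To combine these stages I would invoke the uniform convergence theorem for regularly varying functions: for each fixed $\varepsilon\in(0,1)$, $\varphi(\log n - s)/\varphi(\log n) \to (1 - s/\log n)^\beta$ uniformly on $s/\log n \in [0,1-\varepsilon]$. With the change of variable $y = s/\log n$ this gives, up to asymptotically negligible error terms,
\[
\frac{A(n) - \text{centering}}{\varphi(\log n)\,c_n} \; \approx \; \int_0^{1-\varepsilon}(1-y)^\beta\,{\rm d}\widetilde{T}_n(y),
\]
and a pathwise integration by parts (with vanishing boundary terms for $\beta > 0$) recasts this as $\beta\int_0^{1-\varepsilon}(1-y)^{\beta-1}\widetilde{T}_n(y)\,{\rm d}y$. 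Since the linear functional $f \mapsto \beta\int_0^1(1-y)^{\beta-1}f(y)\,{\rm d}y$ is a.s.\ continuous on the paths of $Z$ in the $J_1$ topology (because $(1-y)^{\beta-1}$ is Lebesgue integrable on $(0,1)$), the continuous mapping theorem delivers the limit $\beta\int_0^1(1-y)^{\beta-1}Z(y)\,{\rm d}y = \beta\int_0^1 y^{\beta-1}Z(1-y)\,{\rm d}y$. The boundary case $\beta = 0$ is handled directly, producing $\widetilde{T}_n(1)\Rightarrow Z(1)$ with no integration by parts.

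The principal obstacle lies in the tail region $y\in[1-\varepsilon,1]$ together with $s > \log n$, where the slow-variation approximation breaks down: here a separate uniform estimate, using monotonicity of $\varphi$ and finiteness of $\tt m$, is needed to show the contribution is $o(\varphi(\log n)\,c_n)$ so that $\varepsilon\to 0$ may be sent after the weak convergence step. A secondary subtlety in case (c) is that the stable limit $Z$ has jumps, so extra care is required both in verifying the stable FLT for $T$ from the L\'evy-measure condition (\ref{domain1}) and in confirming that the integral functional above remains continuous at $Z$-paths in the $J_1$ topology, which ultimately reduces again to the Lebesgue integrability of $(1-y)^{\beta-1}$.
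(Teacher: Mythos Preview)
Your approach is essentially the paper's: Poissonize, show $K(t)-A(t)$ is negligible via a second-moment bound, then derive the law of $A(t)$ from a functional limit theorem for $T$ combined with the regular variation of $\varphi$, and finally depoissonize. Two imprecisions are worth flagging.

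First, the predictable quadratic variation of $K(t)-A(t)$ is $A(t)$ itself, not $A^{(1)}(t)$: the paper (via Lemma~2.6 of \cite{GBarbour}) has $\me\bigl(K(t)-A(t)\bigr)^2\sim{\tt m}^{-1}\int_{[1,t]} u^{-1}\Phi(u)\,{\rm d}u=O(\Phi(t)\log t)$, not $O(\Phi(t))$. Your conclusion survives, since the squared normalization is at least of order $\Phi(t)^2\log t$ in every case, but the bound you quote is too optimistic.

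Second, the invariance principle for $T$ that the paper invokes (Theorem~2a in \cite{Bing72}) is in the $M_1$ topology, not $J_1$. In cases (a) and (b) the limit is Brownian motion and this is immaterial, but in case~(c) the stable limit has jumps while the first-passage process $T$ is continuous, and $J_1$ convergence is not the natural statement here. The paper sidesteps the issue by packaging the passage to the limit into Lemma~\ref{impo}, which is stated for both $M_1$ and $J_1$; your integral functional $f\mapsto\beta\int_{[0,1]}(1-y)^{\beta-1}f(y)\,{\rm d}y$ is indeed $M_1$-continuous (it factors through $L^1$), so replacing $J_1$ by $M_1$ throughout repairs your argument. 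The tail contribution from $s>\log n$ is handled in the paper by the key renewal theorem applied to $\varphi$ on $(-\infty,0]$, which is exactly the ``monotonicity of $\varphi$ and finiteness of ${\tt m}$'' estimate you anticipate.
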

\vskip0.2cm
\begin{rem}\label{1002}

Set $J:=\beta \int_{[0,1]}Z(1-y)y^{\beta-1}{\rm d}y$. By Lemma
\ref{ri},
$$\log \me \exp({\rm i}tJ)= \int_{[0,\,1]} \log \me\exp({\rm i}t(1-x)^\beta Z(1)){\rm
d}x.$$ Hence $J \od (\alpha \beta+1)^{-1/\alpha}Z(1)$ where the
case $\alpha=2$ corresponds to parts (a) and (b) of Theorem
\ref{main}.

In the definition of constants  $\Phi$ can be replaced
by the Laplace exponent $\widehat{\Phi}$, since the difference
between the functions vanishes at $\infty$
(see \cite{GnePitYor2} Lemma A.1 or \cite{GBarbour} Lemma 2.3).
\end{rem}
\vskip0.2cm
\begin{proof}
Under the assumptions of part (a) we denote by $Z(\cdot)$ the
Brownian motion and set $g(t):=\sqrt{{\tt s}^2{\tt m}^{-3} t}$, under
the assumptions of part (b) we denote by $Z(\cdot)$ the
Brownian motion and set $g(t):={\tt m}^{-3/2}c(t)$ and under the
assumptions of part (c) we denote by $Z(\cdot)$ the
$\alpha$-stable L\'{e}vy process such that $Z(1)$ has
characteristic function \eqref{st1}, and set $g(t):={\tt
m}^{-1-1/\alpha}c(t)$.

For later use we note that $g$ varies regularly at $\infty$ with index
$1/\alpha$, where $\alpha=2$ corresponds to the cases (a) and (b).
This follows from Theorem 1.5.12 in \cite{BGT} which is a result
on asymptotic inverses of regularly varying functions. Further we
note that in the cases (b) and (c) $g(x)$ grows faster than
$\sqrt{x}$. In the latter case, this follows trivially from the
regular variation of $g$ with index $1/\alpha$, $\alpha\in (1,2)$.
In the former case, we have $\int_{[0,\,x]}y^2\nu({\rm d}y) \sim
L(x)$, where $\lix L(x)=\infty$, and $c(x)$ satisfies $\lix
{xL(c(x))\over c^2(x)} =1$. Since $\lix L(c(x))=\infty$ we infer
$\lix {x\over c^2(x)}=0$.

\noindent {\sc Step 1}: We first investigate convergence in
distribution of, properly normalized and centered, $A(t)$, as
$t\to\infty$. Recalling the notation $\varphi(t)=\Phi(e^t)$,
representation \eqref{co}
can be rewritten using integration by parts for the Lebesgue-Stieltjes integral as follows
$$A(e^t)-\varphi(0)T(t)=\int_{[0,\,t]}T(t-z){\rm d}\varphi(z)+\int_{[t,\,\infty)}\varphi(t-z){\rm
d}T(z)=:A_1(t)+A_2(t).$$ Now we want to look at the asymptotic
behavior of $A_2(t)$, as $t\to\infty$. Since
$\Phi^\prime(0)<\infty$ (this is equivalent to the characteristic property
$\int_{[0,\infty)}\min(y,1)\nu({\rm d}y)<\infty$ which holds for
every L\'{e}vy measure $\nu$), the function $\varphi(t)$ is
integrable on $(-\infty,0]$, which together with its monotonicity
ensures that it is directly Riemann integrable on $(-\infty, 0]$.
Therefore, by the key renewal theorem
\begin{equation}\label{inter}
\me A_2(t)= \me \int_{[t,\,\infty)}\varphi(t-z){\rm d}T(z) \ \to \
{\tt m}^{-1}\int_{(-\infty,\, 0]}\varphi(z){\rm d}z<\infty, \ \
t\to\infty.
\end{equation}
{\sc Case $\beta>0$}. It is known (see Theorem 2a in
\cite{Bing72}) that
\begin{equation}\label{16}
W_t(\cdot):={T(t\cdot)-{\tt m}^{-1}(t\cdot)\over g(t)} \Rightarrow
\ Z(\cdot), \ \ t\to\infty,
\end{equation}
in $D[0,\infty)$ in the Skorohod $M_1$-topology. In particular,
\begin{equation}\label{234}
{T(t)-{\tt m}^{-1}t \over g(t)\varphi(t)} \ \tp \ 0, \ \
t\to\infty.
\end{equation} 
To apply Lemma \ref{impo} take $X_t=W_t$ and let $Y_t$ and $Y$ be
random variables with distribution functions $\mmp\{Y_t\leq
y\}={\varphi(ty)\over \varphi(t)}=:u_t(y)$ and $\mmp\{Y\leq
y\}=y^\beta=:u(y)$, $0\leq y\leq 1$. Then, as $t\to\infty$,
\begin{eqnarray*}
{A_1(t)-{\tt m}^{-1}\int_{[0,\,t]}(t-z){\rm d}\varphi(z)\over
g(t)\varphi(t)}&=& \int_{[0,1]}W_t(1-y){\rm d}u_t(y)\\&\dod&
\int_{[0,1]}Z(1-y){\rm
d}u(y)\\&=&\beta\int_{[0,1]}Z(1-y)y^{\beta-1}{\rm d}y=J.
\end{eqnarray*}
Recalling \eqref{inter} and \eqref{234} we obtain
\begin{equation}\label{103}
{A(e^t)-{\tt m}^{-1}\bigg(\int_{[0,\,t]}(t-z){\rm
d}\varphi(z)+\varphi(0)t\bigg)\over g(t) \varphi(t)} \ \dod \ J.
\end{equation}
Noting that
$$\int_{[1,\,e^t]}y^{-1}\Phi(y){\rm
d}y=\int_{[0,\,t]}\varphi(y){\rm d}y=\int_{[0,\,t]}(t-z){\rm
d}\varphi(z)+\varphi(0)t$$ and replacing in \eqref{103} $e^t$ by
$t$ concludes the proof of Step 1 in the  case $\beta>0$.\newline
{\sc Case $\beta=0$}. We have, for $\varepsilon\in (0,1)$
\begin{eqnarray}\label{1115}
{A_1(t)-{\tt m}^{-1}\int_{[0,\,t]}(t-z){\rm d}\varphi(z)\over g(t)
\varphi(t)}&=& {\int_{[0,\,\varepsilon]}W_t(1-y){\rm
d}\varphi(ty)\over \varphi(t)}\\&+& {\int_{[\varepsilon,\,
1]}W_t(1-y){\rm d}\varphi(ty)\over \varphi(t)
}\nonumber\\&=:&J_1(t,\varepsilon)+J_2(t,\varepsilon)\nonumber.
\end{eqnarray}
We first show that
\begin{equation}\label{1000}
\underset{\varepsilon\downarrow
0}{\lim}\,\lit\,J_1(t,\varepsilon)=Z(1) \ \ \text{in
distribution}.
\end{equation}
To this end, we use the bounds
$$\underset{y\in
[0,\varepsilon]}{\inf}\,W_t(1-y){\varphi(\varepsilon
t)-\varphi(0)\over \varphi(t)}\leq J_1(t,\varepsilon)\leq
\underset{y\in [0,\varepsilon]}{\sup}\,W_t(1-y)
{\varphi(\varepsilon t)\over \varphi(t)}.$$ Recall that the
function $h_1: D[0,\infty)\to \mr$ defined by
$h_1(x):=\underset{y\in [0,\varepsilon]}{\sup}\,x(y)$ is
$M_1$-continuous (see Section 13.4 in \cite{Whitt2}). Hence, in
view of \eqref{16} we conclude that, as $t\to\infty$, the
right-hand side converges in distribution to $\underset{y\in
[0,\varepsilon]}{\sup}\,Z(1-y)$. This further converges to $Z(1)$
on letting $\varepsilon \downarrow 0$. A similar argument applies
to the left-hand side, and \eqref{1000} has been proved.

Using the inequality
$$\underset{y\in
[\varepsilon,1]}{\inf}\,W_t(1-y){\varphi(t)-\varphi(\varepsilon
t)\over \varphi(t)}\leq J_2(t,\varepsilon)\leq \underset{y\in
[\varepsilon, 1]}{\sup}\,W_t(1-y) {\varphi(t)-\varphi(\varepsilon
t)\over \varphi(t)}$$ and arguing in much the same way as above we
conclude that $\lit\,J_2(t,\varepsilon)=0$ in distribution. This
together with \eqref{inter} allows us to conclude that
\begin{equation}\label{10003}
{A(e^t)-{\tt m}^{-1}\bigg(\int_{[0,\,t]}(t-z){\rm
d}\varphi(z)+\varphi(0)t\bigg)\over g(t) \varphi(t)} \ \dod \
Z(1).
\end{equation}
Replacing in this relation $e^t$ by $t$ completes the proof of
Step 1 in the case $\beta=0$.

\noindent {\sc Step 2}: Now we argue that the same convergence in
distribution holds with $A(t)$ replaced by $K(t)$. In other words,
we will prove that $${A(t)-K(t)\over g(\log t)\Phi(t)} \tp \ 0, \
\ t\to\infty.$$ Since in the cases (b) and (c) $g(x)$ grows faster
than $\sqrt{x}$ (see the beginning of the proof) it suffices to
show that
\begin{equation}\label{xxx}
{A(t)-K(t)\over \sqrt{\log t}\Phi(t)} \tp \ 0, \ \ t\to\infty.
\end{equation}
By Lemma 2.6 in \cite{GBarbour},
\begin{equation}\label{5555}
\me \big(A(t)-K(t)\big)^2 \ \sim \ {\tt m}^{-1}\int_{[1,\,t]}
u^{-1}\Phi(u){\rm d}u, \ \ t\to\infty.
\end{equation}
Hence $$\me \bigg({A(t)-K(t)\over \sqrt{\log t}\Phi(t)}\bigg)^2 \
\sim \ {\int_{[1,\,t]} u^{-1}\Phi(u){\rm d}u\over {\tt m}\log
t\Phi^2(t)}\leq {\log t\Phi(t)\over {\tt m}\log
t\Phi^2(t)}={1\over {\tt m}\Phi(t)},$$ and \eqref{xxx} follows by
Chebyshev's inequality.

\noindent {\sc Step 3}: The last step is `depoissonization', i.e.
passing from the Poisson process to the original fixed-$n$
exponential sample. Since $K(t)$ is nondecreasing, this is easy,
and the proof is omitted (see the proof of Theorem \ref{main4}
where the depoissonization is implemented for a non-monotone
function).
\end{proof}

Our second main result concerns `fast' functions of slow variation $\Phi$, which grow
faster than any power of $\log t$,
for instance
$\Phi(t)\sim   \exp(\gamma \log^\delta t)$ for some $\gamma>0$ and $\delta\in
(0,1)$.
\begin{thm}\label{main3}
Suppose Condition B holds.

\noindent {\rm (a)} \noindent Under the assumption of part (a) of
Theorem \ref{main}
$${K_n-{\tt m}^{-1}\int_{[1,\,n]}y^{-1}\Phi(y){\rm d}y \over \sqrt{{\tt
s}^2{\tt m}^{-3}\log n}\Phi(n)} \ \dod \
\int_{[0,\,\infty)}Z(y)e^{-y}{\rm d}y, \ \ n\to\infty,$$ where
$(Z(y))_{y\geq 0}$ is the Brownian motion.

\noindent {\rm (b)} Under the assumptions of part (b) of Theorem
\ref{main} $${K_n-{\tt m}^{-1}\int_{[1,\,n]}y^{-1}\Phi(y){\rm d}y
\over {\tt m}^{-3/2}c(\log n)\Phi(n)} \ \dod \
\int_{[0,\,\infty)}Z(y)e^{-y}{\rm d}y, \ \ n\to\infty,$$ where
$\big(Z(y)\big)_{y\geq 0}$ is the Brownian motion.

\noindent {\rm (c)} Under the assumptions of part (c) of Theorem
\ref{main} $${K_n-{\tt m}^{-1}\int_{[1,\,n]}y^{-1}\Phi(y){\rm d}y
\over {\tt m}^{-(\alpha+1)/\alpha}c(\log n)\Phi(n)} \ \dod \
\int_{[0,\,\infty)}Z(y)e^{-y}{\rm d}y, \ \ n\to\infty,$$ where
$\big(Z(y)\big)_{y\geq 0}$ is the $\alpha$-stable L\'{e}vy process
such that $Z(1)$ has characteristic function \eqref{st1}.

\end{thm}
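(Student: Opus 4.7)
The plan is to mimic the three-step strategy used for Theorem \ref{main}: weak convergence of the (centered and normalized) compensator $A(t)$, transfer of this convergence to $K(t)$ via the second moment estimate \eqref{5555}, and depoissonization. Only the first step requires a genuinely new input, since \eqref{5555} is in force under Condition B as well and yields, via Chebyshev's inequality together with the estimate $\int_{[1,\,t]}u^{-1}\Phi(u)\,{\rm d}u\sim h(\log t)\Phi(t)$, the required negligibility of $A(t)-K(t)$; the depoissonization exploits only the monotonicity of $K(t)$ in $t$ and is identical to Step 3 in the proof of Theorem \ref{main}.

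For the compensator analysis, I start from the decomposition
$$A(e^t)-\varphi(0)T(t)=A_1(t)+A_2(t)$$
from the proof of Theorem \ref{main}. The bound $\me A_2(t)=O(1)$ from the key renewal theorem is preserved, and $\varphi(0)T(t)$ is negligible on the relevant scale because under Condition B the function $\varphi(t)$ diverges faster than any polynomial in $t$. The main novelty lies in the treatment of $A_1(t)=\int_{[0,t]}T(t-z)\,{\rm d}\varphi(z)$: where the proof of Theorem \ref{main} used the \emph{multiplicative} substitution $z=ty$ suggested by $\varphi(ty)/\varphi(t)\to y^\beta$, here the defining property $\varphi(t-uh(t))/\varphi(t)\to e^{-u}$ of de Haan's class $\Gamma$ suggests the \emph{additive} substitution $z=t-uh(t)$. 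Under this substitution the push-forward measure $\mu_t$ with distribution function $F_t(u)=1-\varphi(t-uh(t))/\varphi(t)$ on $[0,t/h(t)]$ converges weakly to the exponential measure $e^{-u}\,{\rm d}u$ on $[0,\infty)$.

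Decomposing $T(uh(t))={\tt m}^{-1}uh(t)+g(h(t))W_{h(t)}(u)$, the deterministic part reproduces the centering ${\tt m}^{-1}\int_{[0,t]}(t-z)\,{\rm d}\varphi(z)$, and what remains is
$$\frac{A_1(t)-{\tt m}^{-1}\int_{[0,t]}(t-z)\,{\rm d}\varphi(z)}{g(h(t))\varphi(t)}\;=\;\int_0^{t/h(t)}W_{h(t)}(u)\,{\rm d}\mu_t(u).$$
The functional convergence \eqref{16} remains valid, since $h(t)\to\infty$ under Condition B, and combining it with $\mu_t\Rightarrow e^{-u}\,{\rm d}u$ through a Skorokhod-type argument in the spirit of Lemma \ref{impo} yields
$$\int_0^{t/h(t)}W_{h(t)}(u)\,{\rm d}\mu_t(u)\ \dod\ \int_{[0,\infty)}Z(u)e^{-u}\,{\rm d}u$$
in each of the three cases (a), (b), (c), with $Z$ as in the corresponding part of Theorem \ref{main}. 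Replacing $e^t$ by $n$ delivers the required asymptotics of the normalized compensator, after which steps 2 and 3 conclude the proof.

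I expect the main technical obstacle to lie in the tail of the integral $\int_0^{t/h(t)}W_{h(t)}(u)\,{\rm d}\mu_t(u)$ for $u$ near $t/h(t)$. Although the limiting measure $e^{-u}\,{\rm d}u$ is exponentially small there, in case (c) $W_{h(t)}$ has only moments of order strictly less than $\alpha<2$, so a uniform-in-$t$ domination of $|W_{h(t)}(u)|$ by a moderate power of $u$---using a Potter-type bound on the regularly varying function $g$---is needed, together with the fact that $F_t$ decays exponentially beyond any fixed level. Once a truncation at a large constant $C$ is chosen, the $M_1$-continuity of integration against $\mu_t$ on $[0,C]$ is standard and the uniform tail bound beyond $C$ closes the argument.
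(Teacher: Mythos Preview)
Your outline matches the paper's proof closely: the same three-step structure, the same additive substitution $z=t-uh(t)$ suggested by $\varphi\in\Gamma$, the same truncation of the resulting integral at a large level $a$, and the same use of Potter's bound on $g$. The one place where your argument is not quite right is the tail control. You invoke ``the fact that $F_t$ decays exponentially beyond any fixed level'', but Condition~B only gives the pointwise limit $v_t(u)=\varphi(t-uh(t))/\varphi(t)\to e^{-u}$; a uniform-in-$t$ exponential majorant of $v_t$ is not established, and in general need not hold.

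The paper handles the tail $J_4(t,a)=-\int_{[a,\,t/h(t)]}W_{h(t)}(u)\,{\rm d}v_t(u)$ differently. It bounds $\me|J_4(t,a)|$: first the moment convergence $\me|T(s)-{\tt m}^{-1}s|/g(s)\to\me|Z(1)|$ (Theorem~1.2 of \cite{Iks3}) is used to dominate $\me|W_{h(t)}(u)|$ by a constant times $g(uh(t))/g(h(t))\le{\rm const}\cdot u^{1/\alpha+\varepsilon}$ via Potter; then one needs only the \emph{first} moment bound $\me\eta_t=\int_0^\infty v_t(u)\,{\rm d}u\to 1$, which follows immediately from the canonical choice $h(t)=\int_0^t\varphi(y)\,{\rm d}y/\varphi(t)$ of the auxiliary function. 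Since $1/\alpha+\varepsilon<1$, boundedness of $\me\eta_t$ yields uniform integrability of $(\eta_t^{1/\alpha+\varepsilon})_{t\ge 0}$, hence $\me\,\eta_t^{1/\alpha+\varepsilon}1_{\{\eta_t>a\}}\to 0$ uniformly in $t$ as $a\to\infty$. Replace your exponential-decay claim with this moment argument and the proof goes through.
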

\begin{rem}\label{1001}
Set $K:=\int_{[0,\,\infty)}Z(u)e^{-u}{\rm d}u$. By Lemma \ref{ri},
$$\log \me \exp({\rm i}tK)= \int_{[0,\infty)}\log \me\exp({\rm
i}te^{-x}Z(1)){\rm d}x.$$ Hence $K \od \alpha^{-1/\alpha}Z(1)$
where the case $\alpha=2$ corresponds to parts (a) and (b) of
Theorem \ref{main3}.
\end{rem}
\begin{proof}
We use the same notation as in the proof of Theorem \ref{main}. We
will only show that
\begin{equation}\label{1033}
{A(e^t)-{\tt m}^{-1}\bigg(\int_{[0,\,t]}(t-z){\rm
d}\varphi(z)+\varphi(0)t\bigg)\over g(h(t)) \varphi(t)} \ \dod \
K=\int_{[0,\,\infty)}Z(u)e^{-u}{\rm d}u,
\end{equation}
the rest of the proof being the same as in Theorem \ref{main}.
For any fixed $a>0$, we have
\begin{eqnarray}\label{105}
{A_1(t)-{\tt m}^{-1}\int_{[0,\,t]}(t-z){\rm d}\varphi(z)\over
g(h(t))\varphi(t)}&=& -\int_{[0,\,a]}W_{h(t)}(y){\rm d}v_t(y)\\&-&
\int_{[a,\,t/h(t)]}W_{h(t)}(y){\rm
d}v_t(y)\nonumber\\&=:&J_3(t,a)+J_4(t,a)\nonumber,
\end{eqnarray}
where $v_t(u):={\varphi(t-yh(t))\over \varphi(t)}$. To apply Lemma
\ref{impo} we take $X_t=W_{h(t)}$ and let $Y_t$ and $Y$ be random
variables with $\mmp\{Y_t>u\}=v_t(u)$ and $\mmp\{Y>u\}=e^{-u}$.
Then
$$J_3(t,a):=-\int_{[0,\,a]}W_{h(t)}(y){\rm d}v_t(y) \ \dod \ \int_{[0,\,a]}Z(y)e^{-y}{\rm d}y, \ \
t\to\infty.$$ Hence
$\underset{a\to\infty}{\lim}\,\lit\,J_3(t,a)=K$ in distribution.

Now we intend to show that, for any $c>0$,
\begin{equation}\label{22}
\underset{a\to\infty}{\lim}\underset{t\to\infty}{\lim\sup}\,\mmp\{|J_4(t,a)|>c\}=0.
\end{equation}
By Theorem 1.2 in \cite{Iks3}, for any $\delta>0$ there exists
$t_0>0$ such that $${\me \big|T(t)-{\tt m}^{-1}t\big|\over
g(t)}\leq \me |Z(1)|+\delta$$ whenever $t\geq t_0$. Hence, for $t$
such that $ah(t)\geq t_0$ and some $\varepsilon\in
(0,1-1/\alpha)$,
\begin{eqnarray}
\me |J_4(t,a)|&\leq& \int_{[ah(t),\,\infty)}{\me |T(y)-{\tt
m}^{-1}y|\over g(y)}{g(y)\over g(h(t))}{\rm
d}(-v_t(y/h(t)))\nonumber\\&\leq& \big(\me|Z(1)|+\delta\big)
\int_{[a,\,\infty)}{g(yh(t))\over g(h(t))}{\rm
d}(-v_t(y))\nonumber\\&\leq& \big(\me |Z(1)|+\delta\big)\,{\rm
const}\, \me
\eta_t^{1/\alpha+\varepsilon}1_{\{\eta_t>a\}}\label{24},
\end{eqnarray}
where in the third line the Potter's bound (Theorem 1.5.6 in
\cite{BGT}) has been utilized (recall that the regular variation
of $g$ was discussed at the beginning of the proof of Theorem
\ref{main}), and $\eta_t$ is a random variable with
$\mmp\{\eta_t>y\}=v_t(y)$. By Corollary 3.10.5 in \cite{BGT}, the
auxiliary function $h$ is unique up to the asymptotic equivalence
and can be taken $h(t)=\int_{[0,\,t]}\varphi(y){\rm
d}y/\varphi(t)$. With such $h$ we have
\begin{equation}\label{23}
\me \eta_t=\int_{[0,\infty)}v_t(y){\rm d}y={1\over
h(t)}{\int_{[0,t]}\varphi(y){\rm d}y\over \varphi(t)}+{1\over
h(t)\varphi(t)}\int_{[0,1]}{\Phi(y)\over y}{\rm d}y \ \to \ 1, \ \
t\to\infty.
\end{equation}
Note that the integral in the second term is finite in view of
$\Phi^\prime(0)<\infty$ (the latter finiteness was discussed in
Step 1 of the proof of Theorem \ref{main}).

Now \eqref{23} implies that the family
$(\eta_t^{1/\alpha+\varepsilon})_{t\geq 0}$ is uniformly
integrable, and \eqref{22} follows from \eqref{24} and Markov's
inequality. From this we conclude that the left-hand side of
\eqref{105} converges in distribution to $K$. This together with
\eqref{inter} and \eqref{234} proves \eqref{1033}.
\end{proof}

\subsection{Convergence of $K_{n,1}$}\label{small}

We shall prove next  convergence in distribution for the number of singleton blocks
$K_{n,1}$. Two cases, when Condition A
and Condition B holds, respectively, are treated in Theorem \ref{main4}
and Theorem \ref{main5}.

\begin{thm}\label{main4}
Assume that the function $t\mapsto t\Phi^\prime(t)$ is
nondecreasing and that Condition A holds with $\beta\geq 1$.

\noindent Under the assumptions of part (a) of Theorem \ref{main}
we have\footnote{Suppose $\beta>1$. According to Remark
\ref{1002}, $(\beta-1)\int_{[0,\,1]}Z(1-y)y^{\beta-2}{\rm d}y\od
(\alpha(\beta-1)+1)^{-1/\alpha}Z(1)$, where the case $\alpha=2$
corresponds to parts (a) and (b) of Theorem \ref{main4}.}: if
$\beta>1$ then
$${K_{n,1}-{\tt m}^{-1}\Phi(n)\over n\Phi^\prime(n)\sqrt{{\tt
s}^2{\tt m}^{-3}\log n}} \ \dod \ (\beta-1)
\int_{[0,1]}Z(1-y)y^{\beta-2}{\rm d}y, \ \ n\to\infty,$$ where
$(Z(y))_{y\in[0,1]}$ is the Brownian motion, and if $\beta=1$ and
$\lin L_1(n)=\infty$ then the limiting random variable is $Z(1)$.

\noindent
Under the assumptions of part (b) of Theorem \ref{main} we have:
if $\beta>1$ then
$${K_{n,1}-{\tt m}^{-1}\Phi(n)\over
{\tt m}^{-3/2} n\Phi^\prime(n)c(\log n)} \ \dod \ (\beta-1)
\int_{[0,1]}Z(1-y)y^{\beta-2}{\rm d}y, \ \ n\to\infty,$$ where
$(Z(y))_{y\in[0,1]}$ is the Brownian motion, and if $\beta=1$ then the
limiting random variable is $Z(1)$.

\noindent Under the assumptions of part (c) of Theorem \ref{main} we have:
if $\beta>1$ then
$${K_{n,1}-{\tt
m}^{-1}\Phi(n)\over {\tt m}^{-1-1/\alpha} n\Phi^\prime(n)c(\log
n)} \ \dod \ (\beta-1) \int_{[0,1]}Z(1-y)y^{\beta-2}{\rm d}y, \ \
n\to\infty,$$ where $(Z(y))_{y\in[0,1]}$ is the $\alpha$-stable
L\'{e}vy process such that $Z(1)$ has characteristic function
\eqref{st1}, and if $\beta=1$ then the limiting random variable is
$Z(1)$.
\end{thm}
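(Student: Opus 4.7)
The plan is to follow the three-step template of the proof of Theorem \ref{main}, applied to the compensator $A^{(1)}(t)$ of the process counting singleton-occupied gaps. The key preliminary observation is that, after substituting $s=S(v)$ and using $e^u\Phi'(e^u)=\varphi'(u)$, representation \eqref{co1} becomes
\[
A^{(1)}(e^t)=\int_{[0,\,\infty)}\varphi'(t-s)\,{\rm d}T(s),
\]
which has exactly the structure of $A(e^t)$ in the proof of Theorem \ref{main}, but with $\varphi$ replaced by $\varphi'$. The hypothesis that $t\Phi'(t)$ is nondecreasing translates to $\varphi'$ being nondecreasing; Karamata's monotone density theorem (Theorem 1.7.2 of \cite{BGT}) then yields $\varphi'(u)\sim\beta u^{\beta-1}L_1(u)$ for $\beta>1$ (regularly varying of index $\beta-1$), while for $\beta=1$ with $L_1\to\infty$ the function $\varphi'$ is slowly varying and divergent.

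Step 1 repeats the integration-by-parts splitting of Step 1 in the proof of Theorem \ref{main} with $\varphi\mapsto\varphi'$, writing $A^{(1)}(e^t)-\varphi'(0)T(t)=B_1(t)+B_2(t)$. Direct Riemann integrability of $\varphi'$ on $(-\infty,0]$ (with $\int_{-\infty}^0\varphi'(u)\,{\rm d}u=\Phi(1)<\infty$) together with the key renewal theorem bounds $\me B_2(t)$, and the functional limit theorem for the first-passage process gives $(T(t)-{\tt m}^{-1}t)/(g(t)\varphi'(t))\tp 0$, which kills the $\varphi'(0)T(t)$ term after normalization. The leading term $B_1(t)$ is centered by its renewal expectation and expressed as $\int_{[0,\,1]}W_t(1-y)\,{\rm d}u_t(y)$ with $u_t(y):=\varphi'(ty)/\varphi'(t)\to y^{\beta-1}$; Lemma \ref{impo} then produces the limit $(\beta-1)\int_{[0,\,1]}Z(1-y)y^{\beta-2}\,{\rm d}y$, and the $\varepsilon$-splitting trick used for the $\beta=0$ case of Theorem \ref{main} handles $\beta=1$. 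A pleasant simplification is that integration by parts gives $\int_{[0,\,t]}(t-z)\,{\rm d}\varphi'(z)+\varphi'(0)t=\varphi(t)-\varphi(0)$, so the centering collapses to ${\tt m}^{-1}\Phi(n)$ up to a bounded error. Step 2 replaces $A^{(1)}(t)$ by $K(t,1)$ via the martingale bound $\me(A^{(1)}(t)-K(t,1))^2\leq\me A^{(1)}(t)=O(\Phi(t))$; under Condition A with $\beta\geq 1$ (and $L_1\to\infty$ when $\beta=1$), the direct comparison $\sqrt{\Phi(t)}=o(g(\log t)\,t\Phi'(t))$ holds in all three regimes (a), (b), (c), and Chebyshev's inequality yields $(A^{(1)}(t)-K(t,1))/(g(\log t)\,t\Phi'(t))\tp 0$.

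The main obstacle is Step 3, the depoissonization, since $K_{n,1}$ is not monotone in $n$ and the simple sandwich argument used for $K_n$ fails. The deterministic bound $|K_{\pi_n,1}-K_{n,1}|\leq|\pi_n-n|$ is only of order $\sqrt n$ in probability, far too crude against the polylogarithmic normalization $g(\log n)\,n\Phi'(n)$, so the cancellation must be found explicitly. The plan is to condition on the range $\mathcal{R}$, for which the conditional first moments have the explicit form
\[
\me[K_{n,1}\mid\mathcal{R}]=\sum_g np_g(1-p_g)^{n-1},\qquad \me[K(n,1)\mid\mathcal{R}]=\sum_g np_g e^{-np_g}
\]
(with $p_g$ the exponential mass of the gap $g$); the Poisson--binomial comparison $(1-p_g)^{n-1}=e^{-np_g}(1+O(p_g))$ shows that the two conditional means differ by a term of order $\me\sum_g n p_g^2 e^{-np_g}=O(\Phi(n)/n)$, negligible against the normalization. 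The conditional variances of $K_{n,1}$ and $K(n,1)$ given $\mathcal{R}$ are both of order $\Phi(n)$, handled exactly as in Step 2. Combining these yields $(K_{n,1}-K(n,1))/(g(\log n)\,n\Phi'(n))\tp 0$, which together with Steps 1--2 completes the proof.
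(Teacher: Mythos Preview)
Your Steps 1 and 2 match the paper's argument closely: the representation $A^{(1)}(e^t)=\int_{[0,\infty)}\varphi'(t-s)\,{\rm d}T(s)$, the direct Riemann integrability of $\varphi'$ on $(-\infty,0]$ with the key renewal theorem handling the tail, Lemma~\ref{impo} for the main term, the collapse of the centering to ${\tt m}^{-1}\Phi(t)$, and the martingale identity $\me(K(t,1)-A^{(1)}(t))^2=\me A^{(1)}(t)\sim{\tt m}^{-1}\Phi(t)$ combined with $\varphi(t)/((\varphi'(t))^2t)\to 0$ are exactly what the paper does.

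Your Step 3 takes a genuinely different route from the paper's, and contains a gap. The paper does not compare conditional moments given $\mathcal R$. Instead it writes $K_{n,1}=K(T_n,1)$ with $T_n$ the $n$th arrival time of the unit Poisson process, and exploits that both $K(t)$ and $L(t):=K(t)-K(t,1)$ are nondecreasing in $t$: on $\{|T_n-n|\le x\sqrt n\}$ one sandwiches $K(T_n,1)=K(T_n)-L(T_n)$ between $K(n\mp x\sqrt n)-L(n\pm x\sqrt n)$, reducing matters to showing $(K(n+x\sqrt n)-K(n-x\sqrt n))/d(n)\tp 0$, which follows from a first-moment estimate using Proposition~\ref{s}. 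Together with the slow variation of $d$ and the bound $|r(t\pm x\sqrt t)-r(t)|\le {\tt m}^{-1}x\sqrt t\,\Phi'(t-x\sqrt t)$, this closes the argument without any multinomial variance calculation.

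In your approach, the assertion that the conditional variance of $K_{n,1}$ given $\mathcal R$ is of order $\Phi(n)$ is \emph{not} ``handled exactly as in Step 2'': Step 2 rests on the compensator identity for the Poisson model, whereas under multinomial sampling the indicators $\mathbf 1\{n_g=1\}$ are neither independent nor monotone functions of the counts, so negative association does not apply and the cross terms $\sum_{g\ne h}{\rm Cov}(\mathbf 1\{n_g=1\},\mathbf 1\{n_h=1\}\mid\mathcal R)$ need a separate estimate. That estimate can be pushed through (each covariance is $O\big(n^2p_gp_he^{-np_g-np_h}(p_g+p_h+1/n)\big)$, and the sum has expectation $O(\Phi(n))$ after using $\sum_h p_h=1$ and $\me\sum_g(np_g)^2e^{-np_g}=O(\Phi(n))$), but it is genuine extra work, not a repetition of Step 2. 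The paper's monotonicity sandwich avoids it entirely.
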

\begin{rem}
Theorem \ref{main4} does not cover one interesting case when
${\tt s}^2<\infty$ and $\Phi(x) \sim c\log x$, as $x\to\infty$, where
$c>0$ is a constant. We conjecture that
\begin{equation*}\label{888}
{K_{n,1}-{\tt m}^{-1}\Phi(n)\over c\log^{1/2}n} \ \dod \ ({\tt
s}^2{\tt m}^{-3})^{1/2}V_1+({\tt m}c)^{-1/2}V_2, \ \ n\to\infty,
\end{equation*}
where $V_1$ and $V_2$ are independent random variables with the
standard normal distribution. In combination with the proof of
Theorem \ref{main4} this would follow once we could show that
$${K(t,1)-A^{(1)}(t)\over c\log^{1/2}t} \ \dod \ ({\tt m}\,c)^{-1/2}V_2, \ \
t\to\infty.$$ However, we have not been able to work it out.
\end{rem}
\begin{proof}
Using \eqref{co1} we have
$$A^{(1)}(e^t)=\int_{[0,\,\infty)} \varphi^\prime(t-y){\rm d}T(y)=\int_{[0,\,t]}+\int_{[t,\,\infty)}=:A^{(1)}_1(t)+A^{(1)}_2(t).$$
The function $\varphi^\prime$ is nonnegative and integrable on
$(-\infty,0]$, and the function $e^{-y}\varphi^\prime(y)$ is
nonincreasing on $\mr$. This implies that $\varphi^\prime$ is
directly Riemann integrable on $(-\infty,0]$ (see, for instance,
the proof of Corollary 2.17 in \cite{DurLig}). Therefore, by the
key renewal theorem, as $t\to\infty$,
\begin{equation}\label{key}
\me A_2^{(1)}(t)\to {\tt m}^{-1}\int_{(-\infty,\,
0]}\varphi^\prime(y){\rm d}y=\Phi(1)/{\tt m}<\infty.
\end{equation}
Now convergence in distribution of $A^{(1)}(t)$ with the same
centering and normalization as asserted for $K_{n,1}$ (and
$n$ replaced by the continuous variable $t$) follows
along the same lines as in the proof of Theorem \ref{main} for
$A(t)$.

Arguing in the same way as in the proof of Lemma 2.6 in
\cite{GBarbour} we conclude that $$\me (K(t,1)-A^{(1)}(t))^2=\me
A^{(1)}(t).$$ Hence, according to \eqref{key} and Proposition
\ref{s},
$$\me (K(t,1)-A^{(1)}(t))^2 \ \sim \ {\tt m}^{-1}\Phi(t), \ \
t\to\infty.$$
The function $\varphi'$ is nondecreasing since $t\Phi'(t)$ was assumed such,
hence by the
monotone density theorem (Theorem 1.7.2 in \cite{BGT}) we conclude that, as
$t\to\infty$,
$${\varphi(t)\over (\varphi^\prime(t))^2 t} \ \sim \ {t^\beta L_1(t)\over \beta^2 t^{2\beta-2}L_1^2(t)t}={1\over \beta^2}{1\over t^{\beta-1}L_1(t)}.$$
This converges to zero whenever $\beta>1$ or $\beta=1$ and $\lit
L_1(t)=\infty$. Therefore, by Chebyshev's inequality
$${K(t,1)-A^{(1)}(t)\over t\Phi^\prime(t)\sqrt{\log t}} \
\tp  0, \ \
t\to\infty.$$
Since the normalization $t\Phi^\prime(t)\sqrt{\log
t}$ exhibits the slowest growth among the three normalizations
arising in the theorem (see the beginning of the proof of Theorem
\ref{main} for explanation), under the current assumption we
conclude that convergence in distribution as stated in the theorem
holds with $K_{n,1}$ replaced by $K(t,1)$ and the normalizing sequences
replaced by the normalizing functions.

Now we shall discuss the remaining case ${\tt s}^2=\infty$,
$\beta=1$ and $\lit L_1(t)=c\in (0,\infty)$ (note that in view of
the monotonicity assumption on $\varphi^\prime$ and the relation
$\varphi^\prime(t)\sim L_1(t)$, the limit of $L_1$ must exist).
The normalization $q_n$, say, claimed for $K_{n,1}$ grows not
slower than $n\Phi^\prime(n)\log^{1/2}n L_2(n)$ for some $L_2$
slowly varying at $\infty$ with $\lin L_2(n)=\infty$. Then,
Chebyshev's inequality implies $${K(n,1)-A^{(1)}(n)\over q_n} \
\tp 0, \ \ n\to\infty.$$ This proves that the asserted convergence in
distribution holds with $K_{n,1}$ replaced by
$K(t,1)$ in this case too.

It remains to depoissonize. Let
$(t_k,x_k)$ be the atoms of a planar Poisson point process in the
positive quadrant with the intensity measure given by $e^{-x} {\rm
d}t\,{\rm d}x$. The process $(X_t)_{t\geq 0}$ with
$X_t:=\sum_{t_k\leq t}x_k$ is a compound Poisson process with unit
intensity and jumps having the standard exponential distribution.
Now, with $z\geq 0$ fixed, $\pi_z$ can be identified with the
number of jumps of $(X_t)$ occurring before time $z$, which
implies that $(\pi_z)_{z\geq 0}$ is a homogeneous Poisson process
with unit intensity. Denote by $(T_n)_{n\in\mn}$ its arrival
times. We already know that
\begin{equation}\label{8989}
{K(t,1)-r(t)\over d(t)} \ \dod \ X, \ \ t\to\infty
\end{equation}
for $r(t):={\tt m}^{-1}\Phi(t)$, the case-dependent normalizing
function $d(t)$ and the case-dependent random variable $X$. Since
$K(T_n,1)=K_{n,1}$ it suffices to check that
$${K(T_n,1)-r(n)\over d(n)} \ \dod \ X, \ \ n\to\infty.$$

In the subsequent computations we will use arbitrary but fixed
$x\in\mr$. Given such  $x$ we will choose $n_0\in\mn$ such that
the sequence $(n+x\sqrt{n})_{n\geq n_0}$ is nondecreasing and
every its element is not smaller than one, and the sequence
$(n-x\sqrt{n})_{n\geq n_0}$ is nonnegative. Also, we will choose
$t_0\in (0,\infty)$ such that $t\pm x\sqrt{t}\geq 0$ for $t\geq
t_0$. With this notation all the relations
that follow will be considered either for $t\geq t_0$
or $n\geq n_0$.

The function $d(t)$ is slowly varying, which implies that the
convergence $\lit {d(ty)\over d(t)}=1$ holds locally uniformly in
$y$. In particular,
\begin{equation}\label{121212}
\lit {d(t\pm x\sqrt{t})\over d(t)}=1.
\end{equation}
The function $r(t)$ has the following property
\begin{equation}\label{121211}
\lit {r(t\pm x\sqrt{t})-r(t)\over d(t)}=0.
\end{equation}
Indeed, the function $t\mapsto \Phi^\prime(t)$ is nonincreasing,
and using the mean value theorem we conclude that $$
{r(t+x\sqrt{t})-r(t)\over
 d(t)}\leq {{\tt m}^{-1}x\sqrt{t}\Phi^\prime(t)\over t\Phi^\prime(t)}o(1), \ \ {r(t)-r(t-x\sqrt{t})\over
 d(t)}\leq {{\tt m}^{-1}x\sqrt{t}\Phi^\prime(t-x\sqrt{t})\over
 t\Phi^\prime(t)}o(1).$$ By the monotone density theorem (Theorem 1.7.2 in \cite{BGT}), the function $\Phi^\prime(t)$ is regularly varying at $\infty$ with index $-1$.
Hence $\lit {\Phi^\prime(t-x\sqrt{t})\over \Phi^\prime(t)}=1$, and
the right-hand side of the last inequality tends to zero, as
$t\to\infty$.

Now \eqref{121212} and \eqref{121211} ensure that \eqref{8989} is
equivalent to
\begin{equation}\label{898989}
{K(t\pm x\sqrt{t},1)-r(t)\over d(t)} \ \dod \ X, \ \ t\to\infty.
\end{equation}
We will need the following observation
\begin{equation}\label{us}
{K(t+x\sqrt{t})-K(t-x\sqrt{t})\over d(t)}\ \tp \ 0, \ \
t\to\infty,
\end{equation}
which can be proved as follows. Since $K(t)$ is nondecreasing it
suffices to show that the expectation of the left-hand side
converges to zero. To this end, write
\begin{eqnarray*}
&& \me \bigg(K(t+x\sqrt{t})-K(t-x\sqrt{t})\bigg)\\&=&\me
\int_{[0,\infty)}\bigg(\varphi\big(\log(t+x\sqrt{t})-y\big)-\varphi\big(\log(t-x\sqrt{t})-y\big)\bigg){\rm
d}T(y)\\&=& \me \int_{[0,\,
\log(t+x\sqrt{t})]}\bigg(\varphi\big(\log(t+x\sqrt{t})-y\big)-\varphi\big(\log(t-x\sqrt{t})-y\big)\bigg){\rm
d}T(y)+O(1)\\&\leq& \log\left({t+x\sqrt{t}\over t-x\sqrt{t}}\right) \me
\int_{[0,\,\log(t+x\sqrt{t})]}\varphi^\prime\big(\log(t+x\sqrt{t})-y\big){\rm
d}T(y)+O(1)\\&\sim& {2x\over \sqrt{t}}\, \me
\int_{[0,\,\log(t+x\sqrt{t})]}\varphi^\prime\big(\log(t+x\sqrt{t})-y\big){\rm
d}T(y) \\&\sim&
{2x\over
{\tt m}\sqrt{t}}\,\int_{[0,\,\log(t+x\sqrt{t})]}\varphi^\prime(y){\rm
d}y\ \sim \
{2x\over
{\tt m}\sqrt{t}}\Phi(t+x\sqrt{t})\sim
{2x\over
{\tt m}\sqrt{t}}\Phi(t) , \ \ t\to\infty.
\end{eqnarray*}
Here the third line is a consequence of the key renewal theorem
(see the paragraph preceding formula \eqref{key} for more
details). The fourth line follows from the mean value theorem and
the monotonicity of $\varphi^\prime$. While Proposition \ref{s}
justifies the first asymptotic equivalence in the sixth line of the last display, the
last equivalence in that line is implied by the slow variation of
$\Phi$ (see the sentence preceding \eqref{121212} for the
explanation). Now \eqref{us} follows from the last asymptotic
relation and the observation $\lit {t\Phi^\prime(t)\over
\Phi(t)}\sqrt{t}=\infty$, the latter being trivial as the first
factor is slowly varying.

Set $D_n(x):=\{|T_n-n|>x\sqrt{n}\}$. Since $K(t)$ and
$L(t):=K(t)-K(t,1)$ are nondecreasing, we have, for any
$\varepsilon>0$,
\begin{eqnarray*}
\mmp\bigg\{{K(T_n,1)-K(n-x\sqrt{n},1)\over
d(n)}>2\varepsilon\bigg\}&=&\mmp\bigg\{{K(T_n)-L(T_n)-K(n-x\sqrt{n},1)\over
d(n)}>2\varepsilon\bigg\}\nonumber\\&=& \mmp\big\{\ldots
1_{D^c_n(x)}+\ldots 1_{D_n(x)}>2\varepsilon\big\}\nonumber\\&\leq&
\mmp\bigg\{{K(n+x\sqrt{n})-L(n-x\sqrt{n})-K(n-x\sqrt{n},1)\over
d(n)}>\varepsilon\bigg\}\nonumber\\&+&\mmp\big\{\ldots
1_{D_n(x)}>\varepsilon\big\}\nonumber\\&\leq&\mmp\bigg\{{K(n+x\sqrt{n})-K(n-x\sqrt{n})\over
d(n)}>\varepsilon\bigg\}+\mmp\big(D_n(x)\big).
\end{eqnarray*}
Hence, by \eqref{us} and the central limit theorem
\begin{equation}\label{op2}
\underset{n\to\infty}{\lim\sup}\,\mmp\bigg\{{K(T_n,1)-K(n-x\sqrt{n},1)\over
a(n)}>2\varepsilon\bigg\}\leq
\mmp\big\{|\mathcal{N}(0,1)|>x\big\},
\end{equation}
where $\mathcal{N}(0,1)$ denotes a random variable with the
standard normal distribution. Since the law of $X$ is continuous,
we conclude that, for any $y\in\mr$ and any $\varepsilon>0$,
\begin{eqnarray*}
\underset{n\to\infty}{\lim\sup}\,\mmp\bigg\{{K(T_n,1)-r(n)\over
d(n)}>y\bigg\}&\leq&
\underset{n\to\infty}{\lim\sup}\,\mmp\bigg\{{K(T_n,1)-K(n-x\sqrt{n},1)\over
d(n)}>2\varepsilon\bigg\}\\&+&\lin\,\mmp\bigg\{{K(n-x\sqrt{n},1)-r(n)\over
d(n)}>y-2\varepsilon\bigg\}\\&\overset{\eqref{898989},\eqref{op2}}{\leq}&
\mmp\big\{|\mathcal{N}(0,1)|>x\big\}+\mmp\big\{X>y-2\varepsilon\big\}.
\end{eqnarray*}
Letting now $x\to\infty$ and then $\varepsilon\downarrow 0$ gives
$$\underset{n\to\infty}{\lim\sup}\,\mmp\bigg\{{K(T_n,1)-r(n)\over
d(n)}>y\bigg\}\leq \mmp\big\{X>y\big\}.$$ Arguing similarly we
infer
\begin{equation}\label{op3}
\underset{n\to\infty}{\lim\sup}\,\mmp\bigg\{{K(n+x\sqrt{n},1)-K(T_n,1)\over
d(n)}>2\varepsilon\bigg\}\leq \mmp\big\{|\mathcal{N}(0,1)|>x\big\}
\end{equation}
and then
\begin{eqnarray*}
\underset{n\to\infty}{\lim\inf}\,\mmp\bigg\{{K(T_n,1)-r(n)\over
d(n)}>y\bigg\}&\geq& \lin\,\mmp\bigg\{{K(n+x\sqrt{n},1)-r(n)\over
d(n)}>y+2\varepsilon\bigg\}\\&-&
\underset{n\to\infty}{\lim\sup}\,\mmp\bigg\{{K(n+x\sqrt{n},1)-K(T_n,1)\over
d(n)}>2\varepsilon\bigg\}\\&\overset{\eqref{898989},\eqref{op3}}{\geq}&
\mmp\big\{X>y+2\varepsilon\big\}-\mmp\big\{|\mathcal{N}(0,1)|>x\big\}.
\end{eqnarray*}
Letting $x\to\infty$ and then $\varepsilon\downarrow 0$ we arrive
at
$$\underset{n\to\infty}{\lim\inf}\,\mmp\bigg\{{K(T_n,1)-r(n)\over
d(n)}>y\bigg\}\geq \mmp\big\{X>y\big\}.$$ The proof is complete.
\end{proof}

\begin{thm}\label{main5}
Assume that the function $t\mapsto t\Phi^\prime(t)$ is
nondecreasing and that Condition B holds.

\noindent Under the assumptions of part (a) of Theorem
\ref{main3}\footnote{See Remark \ref{1001} for the identification
of the laws of $\int_{[0,\,\infty)}Z(y)e^{-y}{\rm d}y$.}
$${K_{n,1}-{\tt m}^{-1}\Phi(n)\over n\Phi^\prime(n)\sqrt{{\tt
s}^2{\tt m}^{-3}h(\log n)}} \ \dod \
\int_{[0,\infty)}Z(y)e^{-y}{\rm d}y, \ \ n\to\infty,$$ where
$(Z(y))_{y\geq 0}$ is the Brownian motion.

\noindent Under the assumptions of part (b) of Theorem \ref{main3}
$${K_{n,1}-{\tt
m}^{-1}\Phi(n)\over {\tt m}^{-3/2} n\Phi^\prime(n)c(h(\log n))} \
\dod \ \int_{[0,\infty)}Z(y)e^{-y}{\rm d}y, \ \ n\to\infty,
$$ where $(Z(y))_{y\geq 0}$ is the Brownian motion.

\noindent Under the assumptions of part (c) of Theorem \ref{main3}
$${K_{n,1}-{\tt
m}^{-1}\Phi(n)\over {\tt m}^{-1-1/\alpha} n\Phi^\prime(n)c(h(\log
n))} \ \dod \ \int_{[0,\infty)}Z(y)e^{-y}{\rm d}y, \ \
n\to\infty,$$ where $(Z(y))_{y\geq 0}$ is the $\alpha$-stable
L\'{e}vy process such that $Z(1)$ has characteristic function
\eqref{st1}.
\end{thm}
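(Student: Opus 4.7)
The plan is to prove Theorem \ref{main5} by adapting the proof of Theorem \ref{main4} in the same way that Theorem \ref{main3} adapts Theorem \ref{main} to pass from Condition A to Condition B. Write $\varphi(s)=\Phi(e^s)$ and note that $\varphi'(s)=e^s\Phi'(e^s)$, so the hypothesis that $t\mapsto t\Phi'(t)$ is nondecreasing says exactly that $\varphi'$ is monotone. The preparatory ingredient I need is that $\varphi'$ itself belongs to de Haan's class $\Gamma$ with the same auxiliary function $h$ as $\varphi$, together with the sharp asymptotic $\varphi(s)\sim h(s)\varphi'(s)$; under the monotonicity assumption this follows from the monotone-density machinery for de Haan theory (Chapter 3 of \cite{BGT}). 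It implies $\varphi'(s)\to\infty$, since otherwise $h(s)\sim\varphi(s)/\varphi'(s)$ would grow as fast as $\varphi$, contradicting the standard property $h(s)=o(s)$ for $\varphi\in\Gamma$.

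With this in hand, I would analyze $A^{(1)}(e^t)$ by mimicking Step~1 of the proof of Theorem \ref{main3}, reading $\varphi'$ in place of $\varphi$. The same integration by parts gives the split $A^{(1)}(e^t)-\varphi'(0)T(t)=A^{(1)}_1(t)+A^{(1)}_2(t)$; the tail satisfies $\me A^{(1)}_2(t)\to\Phi(1)/{\tt m}$ exactly as in \eqref{key}, so it is negligible after the eventual normalization. For $A^{(1)}_1(t)=\int_{[0,t]}T(t-z)\,{\rm d}\varphi'(z)$ the functional limit \eqref{16} of $T$ combined with Lemma \ref{impo} applied to $v_t(u):=\varphi'(t-uh(t))/\varphi'(t)\to e^{-u}$ yields, after truncating at $a$ and controlling the tail $J_4(t,a)$ by Potter's bound and the $\Gamma$-identity $\me\eta_t\to 1$ exactly as in \eqref{22}--\eqref{24},
$$\frac{A^{(1)}(e^t)-{\tt m}^{-1}\bigl(\int_{[0,t]}(t-z)\,{\rm d}\varphi'(z)+\varphi'(0)t\bigr)}{g(h(t))\,\varphi'(t)}\ \dod\ \int_{[0,\infty)}Z(y)e^{-y}\,{\rm d}y.$$
A routine integration by parts evaluates the centering to ${\tt m}^{-1}(\varphi(t)-\varphi(0))$, so after substituting $n=e^t$ it becomes ${\tt m}^{-1}\Phi(n)$ up to a bounded constant absorbed by the normalization, matching the statement. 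To pass from $A^{(1)}$ to $K(t,1)$ I would use $\me(K(t,1)-A^{(1)}(t))^2=\me A^{(1)}(t)\sim{\tt m}^{-1}\Phi(t)$ and check that its square root is $o(t\Phi'(t)g(h(\log t)))$; writing $s=\log t$ and using $\varphi(s)\sim h(s)\varphi'(s)$, the ratio squared equals $h(s)/(\varphi'(s)g(h(s))^2)$ up to constants, which tends to $0$ in all three cases because $\varphi'(s)\to\infty$ while $h(s)/g(h(s))^2$ is either bounded (case (a)) or tends to $0$ (cases (b), (c)) by the regular variation of $g$ with index $1/\alpha$.

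The concluding depoissonization proceeds verbatim as in the second half of the proof of Theorem \ref{main4}: couple $K_{n,1}=K(T_n,1)$ with the Poisson arrival time $T_n$, insert the sandwich $K(n-x\sqrt{n},1)\leq K(T_n,1)\leq K(n+x\sqrt{n})-L(n-x\sqrt{n})$ on $\{|T_n-n|\leq x\sqrt{n}\}$, and control the bracketing error through the mean computation preceding \eqref{us} and the inequalities \eqref{op2}--\eqref{op3}; the required decay $(K(n+x\sqrt{n})-K(n-x\sqrt{n}))/(t\Phi'(t)g(h(\log t)))\tp 0$ follows from the expectation estimate $2x\Phi(t)/({\tt m}\sqrt{t})$ together with $\Phi(t)/(t\Phi'(t))\sim h(\log t)=o(\log t)$. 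The main obstacle is the preparatory step: establishing rigorously that $\varphi'$ inherits membership in $\Gamma$ with the same auxiliary function $h$ and the sharp equivalence $\varphi(s)\sim h(s)\varphi'(s)$, since every subsequent manipulation — the class-$\Gamma$ convergence of $v_t$, the centering identification, and the variance-versus-normalization comparison — relies on it. Once that ingredient is secured, the rest is a direct translation of the machinery already developed for Theorems \ref{main3} and \ref{main4}.
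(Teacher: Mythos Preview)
Your proposal is correct and follows essentially the same route as the paper's own proof. The ``main obstacle'' you flag is in fact a known package of results from \cite{BGT}: Theorem 3.10.11 gives $\varphi'\in\Gamma$ (whence $\varphi'(t)\to\infty$ directly), Corollary 3.10.7 shows one may take the same auxiliary function $h$, and Corollary 3.10.5 yields $h(t)\sim\varphi(t)/\varphi'(t)$. With these citations in place your argument and the paper's coincide; the only item you leave implicit is that the depoissonization of Theorem~\ref{main4} uses the slow variation of the normalization $d(t)$, which here requires checking that $t\Phi'(t)$ and $h(\log t)$ are slowly varying --- the paper records this separately as Lemma~\ref{5050}.
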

\begin{proof}
By Theorem 3.10.11 in \cite{BGT} $\varphi^\prime$ belongs to de
Haan's class $\Gamma$. For later use, note that this implies
\begin{equation}\label{in}
\lit \varphi^\prime(t)=\infty.
\end{equation}
By Corollary 3.10.7 in \cite{BGT} one can take $h$ as the
auxiliary function of $\varphi^\prime$. With this at hand the
proof of convergence in distribution of $A^{(1)}(t)$ with the same
centering and normalization as claimed for $K_{n,1}$ (but with
discrete argument $n$ replaced by continuous argument $t$)
literally repeats the proof of Theorem \ref{main3}, thus omitted.

The next step is to prove that $${K(t,1)-A^{(1)}(t)\over d(t)} \
\tp 0, \ \ t\to\infty,$$ where, depending on the context,
$d(t)$
equals either
$${\rm const}\, t\Phi^\prime(t)\sqrt{h(\log t)}  {\rm~~~or~~~}
{\rm const}\, t\Phi^\prime(t)c(h(\log t)).$$
Since
the function $t\Phi^\prime(t)\sqrt{h(\log t)}$ grows slower than
the other one it suffices to prove that
\begin{equation}\label{2121}
{K(t,1)-A^{(1)}(t)\over t\Phi^\prime(t)\sqrt{h(\log t)}} \ \tp 0,
\ \ t\to\infty.
\end{equation}
From the proof of Theorem \ref{main4} we know that
$$\me (K(t,1)-A^{(1)}(t))^2 \ \sim \ {\tt m}^{-1}\Phi(t), \ \
t\to\infty.$$  By Corollary 3.10.5 in \cite{BGT}, $$h(t) \sim
{\varphi(t)\over \varphi^\prime(t)}, \ \ t\to\infty.$$ Therefore,
using \eqref{in} at the last step,
$${\varphi(t)\over (\varphi^\prime(t))^2 h(t)} \ \sim \ {1\over
\varphi^\prime(t)} \ \to \ 0, \ \ t\to\infty,$$ and relation
\eqref{2121} follows by Chebyshev's inequality.

By Lemma \ref{5050}, the functions $d(t)$ are slowly varying at
$\infty$. Keeping this in mind, the depoissonization step runs
exactly the same route as in the proof of Theorem \ref{main4}.
\end{proof}

\section{The compound Poisson case}\label{com}

In this section we assume that $S$ is a compound Poisson process
whose L\'{e}vy measure $\nu$ is a  probability measure. This does
not reduce generality, since the range $\cal R$ is not affected by
the normalization of $\nu$. Let $-\log W_1,~ -\log W_2, \ldots$
(where $0<W_j<1$ a.s.) be the sizes of the consecutive jumps of
$S$, which are independent random variables with distribution
$\nu$. Define a zero-delayed random walk $(R_k)_{k\geq 0}$ with
such increments $-\log W_k$. In these terms, the Laplace exponent
of $S$ is $\widehat{\Phi}(t)=1-\me e^{-t(1-W_1)}$.

The argument exploited in Section \ref{gen} extends smoothly when
the variance of $S(1)$ is infinite. Otherwise the
problem arises that the terminal value  $A(n)$
of the compensator does not absorb enough of the variability of $K_n$.
The continuous-time compensator process
carries extra variability  coming from the exponential
waiting times
between the jumps of $S$.
Without going into
details we only mention that the excessive variability is seen from the
asymptotics
\begin{equation}\label{badappr}
\me \big(K(t)-A(t)\big)^2 \ \sim \ {\tt m}^{-1}\log t, \ \
t\to\infty,
\end{equation}
where ${\tt m}=\me S(1)=\me |\log W_1|$.

To circumvent the complication we note that
in the case of finite L\'{e}vy measure the setting is intrinsically
discrete-time, hence it is natural to
replace $T(y)$ in \eqref{co} by
$$\rho(y):=\inf\{k\in\mn_0:
R_k>y\}$$ and to consider a
{\it discrete-time compensator}. Denote by $C_k$ the event that
the interval $[R_{k-1}, R_k]$ is occupied by at least one point of
the Poisson process $(\pi_t(u))_{u\geq 0}$. Then $K(t)=\sum_{k\geq
1} 1_{C_k}$, and we define the discrete-time compensator by
$$B(t):=\sum_{k\geq
1}\mmp\{C_k|R_{k-1}\}=\sum_{k\geq
1}\widehat{\Phi}\big(te^{-R_{k-1}}\big)=\int_{[0,\infty)}\widehat{\Phi}\big(te^{-y}\big){\rm
d}\rho(y).$$ Indeed,
$\mmp\{C_k|R_{k-1},W_k\}=1-\exp\big(-te^{-R_{k-1}}(1-W_k)\big)$
which entails $\mmp\{C_k|R_{k-1}\}=\widehat{\Phi}(te^{-R_{k-1}})$,
thus justifying the second equality above. For the discrete-time
compensator we have
$$\me (K(t)-B(t))^2=\int_{[0,\infty)} \widehat{\Phi}(te^{-y})(1-\widehat{\Phi}(te^{-y})){\rm d}\me \rho(y)
=o(\log t), \ \ t\to\infty,$$ which compared with (\ref{badappr})
shows that $B(t)$ approximates $K(t)$ better than $A(t)$ . Furthermore, by the
key renewal theorem the integral converges to $${\tt
m}^{-1}\int_{[0,\infty)}\widehat{\Phi}(y)\big(1-\widehat{\Phi}(y)\big)y^{-1}{\rm
d}y$$ provided that
$\int_{[1,\infty)}\big(1-\widehat{\Phi}(y)\big)y^{-1}{\rm
d}y<\infty$, and by Proposition \ref{s} it is asymptotic to ${\tt
m}^{-1}\int_{[0,\,\log t]}\big(1-\widehat{\Phi}(e^y)){\rm d}y$
otherwise. These findings allow us to simplify
the proof of the following result obtained previously in \cite{GIM}.
\begin{thm}\label{main2}
{\rm (a)} If $\sigma^2={\rm Var}(\log W)<\infty$ then for
\begin{equation}\label{1022}
b_n={1\over {\tt m}}\int_1^n{\widehat{\Phi}(z)\over z}{\rm d}z \ \
\text{or} \ \ b_n={1\over {\tt m}}\int_0^{\log n}
\mmp\{|\log(1-W)|\leq z\}{\rm d}z,
\end{equation}
where ${\tt m}=\me S(1)=\me |\log W|$, and for
$$a_n=\sqrt{{\sigma^2\over {\tt m}^3}\log n},$$
the limiting distribution of ${K_n-b_n\over a_n}$ is standard
normal.
\newline {\rm (b)} If $\sigma^2=\infty$ and $$\int_0^x y^2 \nu({\rm d}y) \ \sim \ L(x), \ \ x\to\infty,$$ for some $L$ slowly varying at $\infty$, then,
with $b_n$ given in \eqref{1022} and
$$a_n={\tt m}^{-3/2}c(\log n),$$ where $c(x)$ is any positive
function satisfying $\lix\,xL(c(x))/c^2(x)=1$, the limiting
distribution of ${K_n-b_n\over a_n}$ is standard normal.\newline
{\rm (c)} If $\nu$ satisfies \eqref{domain1} then, with $b_n$ as
in \eqref{1022} and
\begin{equation}\label{106}
a_n={\tt m}^{-1-1/\alpha} c(\log n),
\end{equation}
where $c(x)$ is any positive function satisfying $\lix
\,xL(c(x))/c^\alpha(x)=1$, the limiting distribution of
$(K_n-b_n)/a_n$ is the $\alpha$-stable law with characteristic
function \eqref{st1}.
\end{thm}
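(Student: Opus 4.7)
The strategy is to replace the continuous-time compensator $A(t)$ by the discrete-time compensator $B(t)$ introduced in the preceding discussion, exploiting the improved approximation $\me(K(t)-B(t))^2=o(\log t)$. The argument mirrors the proof of Theorem \ref{main}, with the random-walk first-passage time $\rho(y)=\inf\{k\in\mn_0:R_k>y\}$ replacing $T(y)$ and the bounded function $\varphi(y):=\widehat{\Phi}(e^y)$ replacing the slowly varying $\varphi$ used there. The three steps are convergence of $B(n)$, replacement of $B(t)$ by $K(t)$, and depoissonization; in each of (a), (b), (c) let $g(t)$ denote the case-dependent normalization as at the beginning of the proof of Theorem \ref{main}, so that $a_n=g(\log n)$.

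Step 1. Integration by parts, exactly as in the derivation leading to \eqref{co}, yields
\begin{equation*}
B(e^t)-b_{e^t}=\varphi(0)\bigl(\rho(t)-t/{\tt m}\bigr)+\int_{[0,t]}\bigl(\rho(t-z)-(t-z)/{\tt m}\bigr)d\varphi(z)+\int_{(t,\infty)}\varphi(t-y)d\rho(y),
\end{equation*}
after invoking the identity $b_{e^t}={\tt m}^{-1}\int_0^t\varphi(z)dz={\tt m}^{-1}\bigl(t\varphi(0)+\int_{[0,t]}(t-z)d\varphi(z)\bigr)$. Since $\varphi$ is bounded, monotone, and integrable on $(-\infty,0]$, the last summand is $O_p(1)$ by the key renewal theorem, as in \eqref{inter}. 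Under each of (a), (b), (c), the standard first-passage-time functional limit theorem for the random walk $(R_k)$ (the analog of \eqref{16}) gives $W_t(\cdot):=(\rho(t\cdot)-t\cdot/{\tt m})/g(t)\Rightarrow Z(\cdot)$ in $D[0,\infty)$, with $Z$ Brownian in (a), (b) and $\alpha$-stable in (c). Since $\varphi$ is bounded with $\varphi(+\infty)=1$, the finite measure $d\varphi(t\cdot)$ on $[0,1]$ converges weakly to $(1-\varphi(0))\delta_0$ as $t\to\infty$; together with the functional convergence of $W_t$ and the almost-sure continuity of $Z$ at $1$, this yields $g(t)^{-1}\int_{[0,t]}(\rho(t-z)-(t-z)/{\tt m})d\varphi(z)\dod (1-\varphi(0))Z(1)$. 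Combined with $\varphi(0)(\rho(t)-t/{\tt m})/g(t)\dod\varphi(0)Z(1)$, this delivers $(B(n)-b_n)/a_n\dod Z(1)$, matching the asserted limit law in each of the three cases.

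Step 2 uses the variance bound $\me(K(t)-B(t))^2=o(\log t)$ from the preceding discussion, together with the fact that $g(t)^2\geq c\,t$ in (a) and $g(t)/\sqrt{t}\to\infty$ in (b), (c) (as noted at the beginning of the proof of Theorem \ref{main}), to conclude $(K(t)-B(t))/g(\log t)\tp 0$ by Chebyshev. Step 3 (depoissonization) follows immediately from the monotonicity of $K(t)$ and $\pi_n/n\to 1$ a.s., exactly as in Step 3 of the proof of Theorem \ref{main}. The principal technical point is the mixed distributional limit in Step 1, namely that of $\int_{[0,1]}W_t(1-y)d\varphi(ty)$: here one combines the $M_1$-convergence of $W_t$ with the weak convergence of the measures $d\varphi(t\cdot)$ to a Dirac mass concentrated at the endpoint $0$, which requires care (an application of Lemma \ref{impo} together with the $M_1$-continuity of the supremum functional, in the spirit of the treatment of $J_1(t,\varepsilon)$ in Step 1 of the proof of Theorem \ref{main}), especially in case (c) where $Z$ has jumps.
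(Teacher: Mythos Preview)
Your overall strategy matches the paper's: both use the discrete-time compensator $B(t)$, verify $(B(e^t)-b_{e^t})/g(t)\dod Z(1)$, invoke the bound $\me(K(t)-B(t))^2=o(\log t)$, and depoissonize via monotonicity. The difference lies entirely in Step~1. The paper disposes of it in one line by citing Theorem~4.1 of \cite{Iks}, a general functional limit theorem for renewal shot-noise integrals $\int_{[0,\infty)}\varphi(t-y)\,{\rm d}\rho(y)$ with $\varphi$ integrable near $-\infty$. You instead re-derive this special case by hand: integrate by parts, control the tail via the key renewal theorem, and push the first-passage-time FCLT for $\rho$ through the (degenerating) measure ${\rm d}\varphi(t\cdot)$, exactly paralleling the $\beta=0$ branch of the proof of Theorem~\ref{main}. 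Your route is more self-contained; the paper's is shorter but leans on an external result.

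Two points to tighten. First, the sentence ``Combined with $\varphi(0)(\rho(t)-t/{\tt m})/g(t)\dod\varphi(0)Z(1)$'' tacitly asserts joint convergence of the two summands, which does not follow from the two marginal statements alone. The clean fix is to keep the $\varphi(0)W_t(1)$ term inside the sandwich: the combined quantity $\varphi(0)W_t(1)+\int_{(0,\varepsilon]}W_t(1-y)\,{\rm d}\varphi(ty)$ lies between $\varphi(\varepsilon t)\inf_{[1-\varepsilon,1]}W_t$ and $\varphi(\varepsilon t)\sup_{[1-\varepsilon,1]}W_t$, and $\varphi(\varepsilon t)\to 1$, so the $J_1(t,\varepsilon)$-type argument delivers $Z(1)$ directly for the sum. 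Second, the theorem asserts that either centering in \eqref{1022} may be used; you treat only the integral form $b_n={\tt m}^{-1}\int_1^n \widehat{\Phi}(z)z^{-1}\,{\rm d}z$ and never check that the alternative centering ${\tt m}^{-1}\int_0^{\log n}\mmp\{|\log(1-W)|\le z\}\,{\rm d}z$ differs from it by $O(1)$. The paper handles this via Lemma~\ref{we}.
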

\begin{proof}
Let $g$ and $Z$ be as defined at the beginning of the proof of
Theorem \ref{main}. We only give a proof of the poissonized version of the result,
 with $K_n$ replaced by $B(t)$. Recalling the notation
$\varphi(y)=\Phi(e^y)$ and noting that $\varphi$ is integrable in
the neighborhood of $-\infty$, an appeal to Theorem 4.1 in
\cite{Iks} gives $${B(e^t)-{\tt
m}^{-1}\int_{[1,\,e^t]}\big(\Phi(y)/y\big){\rm d}y\over
g(t)}={\int_{[0,\,\infty)}\varphi(t-y){\rm d}\rho(y)-{\tt
m}^{-1}\int_{[0,\,t]}\varphi(y){\rm d}y \over g(t)} \ \dod \ Z(1),
\ \ t\to\infty.$$

Lemma \ref{we} with $V=1-W_1$ ensures that the centering $${\tt
m}^{-1}\int_{[0,\,t]}\varphi(u){\rm d}u={\tt
m}^{-1}\int_{[0,\,t]}\big(1-\me \exp(-e^u(1-W_1))\big){\rm d}u$$
can be safely replaced by $${\tt
m}^{-1}\int_{[0,\,t]}\mmp\{|\log(1-W)|\leq u\}{\rm d}u={\tt
m}^{-1}\int_{[0,\,t]}\mmp\{1-W\geq e^{-u}\}{\rm d}u,$$ because the
absolute value of their difference is $O(1)$ and $\lit
g(t)=\infty$. Replacing $e^t$ by $t$ completes the proof.
\end{proof}

\section{Appendix}

The first auxiliary result concerns the laws of some Riemann
integrals of the L\'{e}vy processes.
\begin{lemma}\label{ri}
Let $q$ be a Riemann integrable function on $[0,1]$ and
$(Z(y))_{y\in [0,1]}$ a L\'{e}vy process with $g(t):=\log
\me\exp({\rm i}tZ(1))$. Then
\begin{equation}\label{1}
\me\exp \bigg({\rm i}t\int_{[0,1]} q(y)Z(y){\rm
d}y\bigg)=\exp\bigg(\int_{[0,1]} g\bigg(t\int_{[y,1]} q(z){\rm
d}z\bigg){\rm d}y\bigg), \ \ t\in\mr.
\end{equation}
Similarly, for $q$ a directly Riemann integrable function on
$[0,\infty)$ and $(Z(y))_{y\geq 0}$ a L\'{e}vy process it holds that
\begin{equation*}
\me\exp \bigg({\rm i}t\int_{[0,\infty)} q(y)Z(y){\rm
d}y\bigg)=\exp\bigg(\int_{[0,\infty)} g\bigg(t\int_{[y,\infty)}
q(z){\rm d}z\bigg){\rm d}y\bigg), \ \ t\in\mr.
\end{equation*}
\end{lemma}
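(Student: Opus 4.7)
The plan is to prove \eqref{1} by approximating the integral by Riemann sums and exploiting the independent-increment structure of $Z$; the half-line statement will then follow by truncation. Fix a partition $0=y_0<y_1<\cdots<y_n=1$ of mesh $\delta$ and set
$$S_n := \sum_{i=1}^n q(y_i)(y_i-y_{i-1})\,Z(y_i).$$
Writing $Z(y_i)=\sum_{j=1}^i \Delta_j$ with $\Delta_j:=Z(y_j)-Z(y_{j-1})$ and interchanging the order of summation yields
$$S_n=\sum_{j=1}^n c_j^{(n)}\,\Delta_j, \qquad c_j^{(n)}:=\sum_{i=j}^n q(y_i)(y_i-y_{i-1}),$$
which is a discrete avatar of the integration-by-parts identity $\int_0^1 q(y)Z(y)\,{\rm d}y=\int_0^1\bigl(\int_y^1 q(z)\,{\rm d}z\bigr)\,{\rm d}Z(y)$ and sidesteps any need for stochastic integration theory.

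Since the increments $\Delta_j$ are independent with $\log \me\exp({\rm i}s\Delta_j)=(y_j-y_{j-1})\,g(s)$, independence and the Lévy scaling give
$$\log\me\exp({\rm i}tS_n)=\sum_{j=1}^n (y_j-y_{j-1})\,g\!\bigl(t\,c_j^{(n)}\bigr).$$
As $\delta\downarrow 0$, Riemann integrability of $q$ yields $c_j^{(n)}\to \int_y^1 q(z)\,{\rm d}z$ for $y\in[y_{j-1},y_j)$, and continuity of the characteristic exponent $g$ together with boundedness of the $c_j^{(n)}$ give
$$\sum_{j=1}^n (y_j-y_{j-1})\,g\!\bigl(t\,c_j^{(n)}\bigr)\ \longrightarrow\ \int_{[0,1]} g\!\Bigl(t\!\int_{[y,1]} q(z)\,{\rm d}z\Bigr){\rm d}y.$$
Simultaneously $S_n\to\int_{[0,1]}q(y)Z(y)\,{\rm d}y$ pathwise almost surely: $Z$ is a.s.\ c\`adl\`ag, hence bounded on $[0,1]$ and continuous off an at most countable set, so the product $qZ$ is pathwise Riemann integrable by the Lebesgue criterion. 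Bounded convergence then upgrades this to convergence of characteristic functions, proving \eqref{1}.

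For the half-line version, one applies the finite-interval identity on $[0,N]$ and lets $N\to\infty$. Direct Riemann integrability of $q$ on $[0,\infty)$ forces $\int_0^\infty|q(y)|\,{\rm d}y<\infty$, so $\int_y^N q(z)\,{\rm d}z\to\int_y^\infty q(z)\,{\rm d}z$ boundedly in $y$; combined with continuity of $g$ at $0$ and a tail estimate $|g(s)|\leq{\rm const}\,|s|^\alpha$ on the range relevant here, the right-hand side converges to $\int_{[0,\infty)}g\bigl(t\int_{[y,\infty)}q(z)\,{\rm d}z\bigr)\,{\rm d}y$. The main obstacle is the left-hand side: one must verify that $\int_0^N q(y)Z(y)\,{\rm d}y$ admits a limit in probability as $N\to\infty$. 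This is established by combining $\int|q|<\infty$ with the scaling bound $\me|Z(y)|=O(y^{1/\alpha})$ (and $O(y^{1/2})$ in the Brownian case), which shows that the tail $\int_N^\infty q(y)Z(y)\,{\rm d}y$ is negligible in $L^1$, hence in probability. Passing to the limit on both sides of the $[0,N]$ identity then yields the asserted formula.
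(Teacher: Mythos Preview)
Your argument for the finite-interval identity \eqref{1} is essentially the paper's: Riemann-sum approximation, Abel rearrangement into a linear combination of independent increments, explicit computation of the characteristic function, and passage to the limit. The paper uses the dyadic partition $k/n$ and invokes L\'evy's continuity theorem without further ado; you work with a general partition and supply the missing justifications (pathwise Riemann integrability of $qZ$ via the Lebesgue criterion for c\`adl\`ag $Z$, bounded convergence for the characteristic functions). So on \eqref{1} the approaches coincide, yours being the more carefully written version.

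The paper does not prove the half-line statement at all (``We only prove the first assertion''). Your truncation-to-$[0,N]$ argument is the natural route, but as written it steps outside the hypotheses of the lemma: the bounds $|g(s)|\le\mathrm{const}\,|s|^\alpha$ and $\me|Z(y)|=O(y^{1/\alpha})$ are properties of $\alpha$-stable processes (with $\alpha>1$) and Brownian motion, not of a general L\'evy process. For an arbitrary L\'evy process the first moment may be infinite and $g$ need not obey a power bound, so neither the $L^1$ tail control of $\int_N^\infty q(y)Z(y)\,{\rm d}y$ nor the dominated-convergence step on the right-hand side is available in general. Your argument therefore proves the half-line identity only for the processes actually used in the paper (which is all that is needed for Remarks~\ref{1002} and~\ref{1001}), not for the lemma as stated. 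If you want a proof at the stated level of generality you would need to replace the moment estimate by a weaker control---e.g.\ a maximal inequality or a Cauchy-in-probability argument based directly on the characteristic exponent---or else add to the lemma an explicit integrability hypothesis on $q$ relative to the growth of $Z$.
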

\begin{proof}
We only prove the first assertion. The integral in the left-hand
side of \eqref{1} exists as a Riemann integral and as such can be
approximated by Riemann sums
\begin{eqnarray*}
n^{-1}\sum_{k=1}^n q(k/n)Z(k/n)&=&\sum_{k=1}^n
\bigg(Z(k/n)-Z((k-1)/n)\bigg)\bigg(n^{-1}\sum_{j=k}^n
q(j/n)\bigg)\\&=:&\sum_{k=1}^n
\bigg(Z(k/n)-Z((k-1)/n)\bigg)a_{k,n}=:I_n
\end{eqnarray*}
Since $Z$ has independent and stationary increments, we conclude
that
$$\log \me \exp({\rm i}t I_n)=n^{-1}\sum_{k=1}^ng(ta_{k,n}).$$
Letting $n\to\infty$ we arrive at \eqref{1}, by L{\'e}vy's continuity
theorem for characteristic functions.
\end{proof}

Lemma \ref{5050} collects some useful properties of the functions
$\Phi$ satisfying Condition B.
\begin{lemma}\label{5050}
Suppose Condition B holds. Then the functions $\Phi(t)$ and
$h(\log t)$ are slowly varying at $\infty$. The function $t\mapsto
t\Phi^\prime(t)$ is slowly varying at $\infty$, whenever it is
nondecreasing.
\end{lemma}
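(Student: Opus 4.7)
The plan is to derive each of the three conclusions directly from standard structural properties of de Haan's class $\Gamma$ recorded in Chapter 3.10 of \cite{BGT}; no new probabilistic input is required beyond Condition B. The only step that uses something non-trivial is the third one, where I will need to upgrade $\varphi\in\Gamma$ to $\varphi^\prime\in\Gamma$, and this is precisely what BGT's Theorem 3.10.11 supplies.

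For slow variation of $\Phi$, I would fix $\lambda>0$, set $s:=\log t$, and write
$$\frac{\Phi(\lambda t)}{\Phi(t)}=\frac{\varphi(s+\log\lambda)}{\varphi(s)}=\frac{\varphi(s-u(s)h(s))}{\varphi(s)},\qquad u(s):=-\frac{\log\lambda}{h(s)}.$$
Since $h(s)\to\infty$ we have $u(s)\to 0$, so the locally uniform form of the class-$\Gamma$ convergence forces the ratio to $e^{0}=1$, which is exactly slow variation.

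For slow variation of $h(\log t)$, I would invoke the fact (BGT Corollary 3.10.5, already used in the proof of Theorem \ref{main3}) that any auxiliary function of a class-$\Gamma$ function is itself slowly varying at $\infty$, and may in fact be chosen equal to $\int_{[0,t]}\varphi(y)\,{\rm d}y/\varphi(t)$. Once $h$ is known to be slowly varying, the identity $h(\log\lambda t)=h(\log t+\log\lambda)$ reduces the claim to $h(s+c)/h(s)\to 1$ for every fixed $c$, which is a routine consequence of the representation theorem for slowly varying functions.

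For the third assertion, note that $\varphi^\prime(u)=e^{u}\Phi^\prime(e^{u})$, so monotonicity of $t\mapsto t\Phi^\prime(t)$ is equivalent to monotonicity of $\varphi^\prime$. By BGT Theorem 3.10.11 (already cited at the start of the proof of Theorem \ref{main5}), this monotonicity together with $\varphi\in\Gamma$ forces $\varphi^\prime\in\Gamma$ with the same auxiliary function $h$. Repeating the argument given above for slow variation of $\Phi$, but with $\varphi^\prime$ in place of $\varphi$, then yields slow variation of $t\mapsto\varphi^\prime(\log t)=t\Phi^\prime(t)$. I anticipate no real obstacle here: the whole lemma is essentially bookkeeping within the BGT catalogue, and the only point worth double-checking is the precise statement of Theorem 3.10.11 in \cite{BGT} to make sure the monotone-derivative upgrade applies under the hypothesis actually imposed on $\Phi^\prime$.
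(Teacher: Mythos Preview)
Your route for the first and third assertions is correct and in fact more direct than the paper's. You observe that for any $\psi\in\Gamma$ with auxiliary function $h\to\infty$, a fixed additive shift $c$ can be written as $(c/h(s))\,h(s)$ with $c/h(s)\to 0$, and then local uniformity of the $\Gamma$-convergence forces $\psi(s+c)/\psi(s)\to 1$; hence $t\mapsto\psi(\log t)$ is slowly varying. Applied to $\psi=\varphi$ and then to $\psi=\varphi'$ (once Theorem 3.10.11 of \cite{BGT} places $\varphi'$ in $\Gamma$), this handles $\Phi$ and $t\Phi'$ in one stroke. The paper proceeds differently: it first establishes that $h(\log t)$ is slowly varying, then writes $\Phi(t)=\big(\int_{[0,\log t]}\varphi(y)\,{\rm d}y\big)/h(\log t)$ and $t\Phi'(t)\sim\Phi(t)/h(\log t)$ (via Corollary 3.10.5 of \cite{BGT}) and concludes by taking quotients of slowly varying functions. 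Your argument avoids these detours.

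Your treatment of $h(\log t)$, however, contains a genuine error. The auxiliary function of a $\Gamma$-varying function is \emph{not} Karamata slowly varying in general: for $\varphi(t)=\exp(t^{a})$ with $0<a<1$ (which satisfies Condition B) one has $h(t)\sim a^{-1}t^{1-a}$, regularly varying with strictly positive index. Corollary 3.10.5 of \cite{BGT} only gives the form $h(t)\sim\int_{[0,t]}\varphi(y)\,{\rm d}y/\varphi(t)$; it does not assert Karamata slow variation. What \emph{is} true---and what the paper invokes via Proposition 3.10.6 and Theorem 2.11.3 of \cite{BGT}---is that $h$ is self-neglecting (Beurling slowly varying): $h(t+xh(t))/h(t)\to 1$ locally uniformly in $x$. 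Combined with the assumption $h(t)\to\infty$ this yields $h(s+c)/h(s)=h\big(s+(c/h(s))h(s)\big)/h(s)\to 1$ for every fixed $c$, by exactly the same trick you used for $\varphi$. So the skeleton of your argument survives, but the property you need for $h$ is self-neglecting rather than Karamata slow variation, and the citation and the ``representation theorem'' justification must be changed accordingly.
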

\begin{proof}
By Proposition 3.10.6 and Theorem 2.11.3 in \cite{BGT}, the
function $h(\log t)$ is slowly varying. As was already mentioned
in the proof of Theorem \ref{main3}, without loss of generality
the auxiliary function $h$ can be taken
$h(t)=\int_{[0,t]}\varphi(y){\rm d}y/\varphi(t)$. By the
representation theorem for slowly varying functions (Theorem 1.3.1
in \cite{BGT}), the function $t\mapsto \int_{[0,\log
t]}\varphi(y){\rm d}y$ is slowly varying. Hence
$\Phi(t)=\varphi(\log t)$ is slowly varying as well.

By Theorem 3.10.11 and Corollary 3.10.7 in \cite{BGT},
$\varphi^\prime$ belongs to de Haan's class $\Gamma$ with the
auxiliary function $h_1$ such that $h_1(t)\sim h(t)$,
$t\to\infty$. By Corollary 3.10.5 in \cite{BGT},
$t\Phi^\prime(t)\sim \Phi(t)/h(\log t)$, $t\to\infty$. Since both
numerator and denominator are slowly varying functions, the
function $t\mapsto t\Phi^\prime(t)$ is slowly varying.
\end{proof}

The following lemma was a basic ingredient in the proof of
our main results (Theorem \ref{main} and the like).
\begin{lemma}\label{impo}
Assume that $X_{t}(\cdot) \Rightarrow X(\cdot)$, as
$t\to\infty$, in $D[0,\infty)$ in the Skorohod $M_1$ or $J_1$
topology. Assume also that, as $t\to\infty$, $Y_t \dod Y$, where
$(Y_t)$ is a family of nonnegative random variables such that
$\mmp\{Y_t=0\}$ may be positive, and $Y$ has an absolutely
continuous distribution. Then, for  $a>0$,
$$\int_{[0,\,a]}X_{t}(u)\mmp\{Y_t\in {\rm d}u\} \ \dod \
\int_{[0,\,a]}X(u)\mmp\{Y\in {\rm d}u\}, \ \ t\to\infty.$$
\end{lemma}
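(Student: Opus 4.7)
The plan is to reduce the problem to almost-sure pathwise convergence via Skorohod's representation theorem, and then to combine a Skorohod time-change for $X_t$ with the Helly-Bray (portmanteau) theorem for the integrating measures, exploiting the fact that the absolutely continuous limit law of $Y$ cannot charge the (at most countable) jump set of any c\`adl\`ag path.

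First, since $D[0,\infty)$ under either the $J_1$ or $M_1$ topology is a Polish space, Skorohod's representation theorem lets one pass to versions on a common probability space for which $X_t\to X$ almost surely. Write $F_t(u):=\mmp\{Y_t\leq u\}$ and $F(u):=\mmp\{Y\leq u\}$. Absolute continuity of $Y$ makes $F$ continuous, so P\'olya's theorem upgrades $F_t\to F$ to uniform convergence on $[0,a]$; in particular any atom at the origin disappears in the limit, $F_t(\{0\})\to 0$.

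Second, the key pathwise claim is
\[
\int_{[0,a]} X_t(u)\,dF_t(u)\ \longrightarrow\ \int_{[0,a]} X(u)\,dF(u),\qquad t\to\infty.
\]
For the $J_1$ topology, pick continuous time-changes $\lambda_t\colon[0,a]\to[0,a]$ realising the Skorohod convergence: $\lambda_t\to\mathrm{id}$ and $X_t\circ\lambda_t\to X$ uniformly on $[0,a]$. The change-of-variable formula yields
\[
\int_{[0,a]} X_t(u)\,dF_t(u)=\int_{[0,a]} X_t(\lambda_t(u))\,d\tilde F_t(u),\qquad \tilde F_t:=F_t\circ\lambda_t,
\]
and $\tilde F_t\to F$ uniformly on $[0,a]$ by the uniform continuity of $F$ together with $\lambda_t\to\mathrm{id}$ and $F_t\to F$ uniformly. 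Decompose the right-hand side as
\[
\int\bigl(X_t\circ\lambda_t-X\bigr)\,d\tilde F_t+\int X\,d\tilde F_t.
\]
The first summand is bounded in absolute value by $\sup_{[0,a]}|X_t\circ\lambda_t-X|\cdot\tilde F_t([0,a])$, which tends to $0$ because the supremum vanishes and $\tilde F_t([0,a])\leq 1$. For the second, invoke Helly-Bray: $X$ is bounded on $[0,a]$, its discontinuity set is at most countable and hence $dF$-null by absolute continuity of $F$, while $\tilde F_t\to F$ uniformly gives weak convergence of the corresponding (sub-)probability measures; therefore $\int X\,d\tilde F_t\to\int X\,dF$. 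The $M_1$ case runs along the same lines, with the continuous time-changes of $J_1$ replaced by Skorohod's parametric representations of the completed graphs of $X_t$ and $X$. Almost-sure convergence then yields the asserted convergence in distribution.

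The main obstacle will be the clean execution of the time-change and substitution step in the $M_1$ topology, where the homeomorphism time-changes of $J_1$ are unavailable and the parametric representation necessitates extra bookkeeping to justify the substitution formula; the remaining ingredients (Skorohod's representation, P\'olya's theorem, Helly-Bray) are classical, and the absolute continuity of $\mmp\{Y\in\cdot\}$ is precisely what allows Helly-Bray to tolerate the c\`adl\`ag discontinuities of $X$.
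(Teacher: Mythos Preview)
Your $J_1$ argument is correct and self-contained: the time-change lets you replace $X_t$ by something uniformly close to $X$, and then Helly--Bray handles the rest because the absolutely continuous $F$ cannot charge the jump set of $X$.

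The $M_1$ case, however, is not just extra bookkeeping. In the $M_1$ parametric representation one parametrises the \emph{completed graphs} $(\rho_t,\sigma_t):[0,1]\to[0,a]\times\mathbb{R}$ rather than time-changing the functions themselves, so there is no analogue of the substitution $\int X_t\,dF_t=\int (X_t\circ\lambda_t)\,d(F_t\circ\lambda_t)$: the spatial component $\sigma_t$ is not of the form $X_t\circ\rho_t$ in general (it traverses the vertical segments at jumps), and the abscissa component $\rho_t$ need not be invertible. Pushing through a substitution formula here would require a genuinely new argument, not a reworking of the $J_1$ one.

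The paper sidesteps this entirely by exploiting a single property that both topologies share: convergence $h_t\to h$ in either $J_1$ or $M_1$ implies \emph{local uniform convergence at every continuity point of $h$} (Whitt, Lemma~12.5.1). This means that whenever $y_t\to y$ with $y$ a continuity point of $h$, one has $h_t(y_t)\to h(y)$; the exceptional set is thus contained in the (at most countable) discontinuity set $\mathcal{D}_h$, which has $F$-measure zero by absolute continuity of $Y$. Billingsley's extended continuous mapping theorem (Theorem~5.5) applied to $Y_t\Rightarrow Y$ then gives $h_t(Y_t)\Rightarrow h(Y)$, and uniform boundedness of $h_t$ on $[0,a]$ upgrades this to convergence of the integrals. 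This deterministic statement (``$h_t\to h$ in $D$ implies $\int_{[0,a]}h_t\,dF_t\to\int_{[0,a]}h\,dF$'') is exactly the hypothesis needed to apply the extended continuous mapping theorem once more, now to $X_t\Rightarrow X$, and the lemma follows without Skorohod representation, time-changes, or any case distinction between the two topologies.

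So your plan works for $J_1$ but the $M_1$ obstacle you flag is real; the cleaner route is to abandon the time-change idea and use the continuity-point property common to both topologies.
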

\begin{proof}
It suffices to prove that
\begin{equation}\label{25}
\lit \me h_t(Y_t)=\me h(Y)
\end{equation}
whenever
$h_t\to h$
in $D[0,\infty)$ in the $M_1$ or $J_1$
topology, for the desired result then follows by the continuous
mapping theorem.

\noindent Since
$h$ restricted to $[c,d]$ is in $D[c,d]$ the set ${\cal D}_h$ of
its discontinuities is at most countable. By Lemma 12.5.1 in
\cite{Whitt2}, convergence in the $M_1$ topology (hence in the
$J_1$ topology) implies the local uniform convergence at all
continuity points of the limit distribution. Hence
$$E:=\{y: \text{there exists} \ y_t \ \text{such that}
\ \lit y_t= y, \text{but} \ \lin h_t(y_t)\neq h(y)   \}\subseteq
{\cal D}_h,$$
and we conclude that $\mmp\{Y\in E\}=0$. Now \eqref{25}
follows by Theorem 5.5 in \cite{Bill}.
\end{proof}


%
%

Proposition \ref{s} is a slight extension of Theorem 4 in
\cite{Sgib} and Lemma 5.4 in \cite{Iks2}.
\begin{assertion}\label{s}
Let $v$ be a nonnegative function with $\lit
\int_{[0,\,t]}v(z){\rm d}z=\infty$. Assume further that $v$ is
either nondecreasing with $$\lit {v(t)\over \int_{[0,\,t]}v(z){\rm
d}z}=0,$$ or nonincreasing. If~ ${\tt m}=\me S(1)<\infty$ then
\begin{equation}\label{4}
\int_{[0,\,t]}v(t-z){\rm d}\me T(z) \ \sim \ {\tt
m}^{-1}\int_{[0,\,t]}v(z){\rm d}z, ~~~t\to\infty,
\end{equation}
provided the subordinator $S$ is nonarithmetic.
The asymptotic relation holds with
additional factor $\delta$ for $S$
 arithmetic subordinator with span $\delta>0$.
\end{assertion}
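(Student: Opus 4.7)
My plan is to prove the proposition by adapting the standard proof of the key renewal theorem to the present setting, where $v$ need not be directly Riemann integrable. Writing $U(s) := \me T(s)$ and substituting $s = t - z$ transforms the left-hand side into $\int_{[0,t]} v(s)\,{\rm d}\tilde U_t(s)$ with $\tilde U_t(s) := U(t) - U(t-s)$, a nondecreasing function of $s$ for each fixed $t$. Blackwell's renewal theorem, valid because ${\tt m} < \infty$ and $S$ is nonarithmetic (or arithmetic with span $\delta$), gives $\tilde U_t(s) \to s/{\tt m}$ as $t \to \infty$ for each fixed $s$, and by monotonicity the convergence is locally uniform on $[0,\infty)$.

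In the nondecreasing case with $v(t)/V(t) \to 0$ (writing $V(t) := \int_{[0,t]} v(z)\,{\rm d}z$), I would approximate $v$ from below and above by step functions of mesh $h$, writing the resulting integrals as Riemann-type sums $\sum_k v(kh)\bigl[U(t-kh) - U(t-(k+1)h)\bigr]$. Blackwell's theorem in its uniform form $U(u+h) - U(u) = h/{\tt m} + o(1)$ as $u \to \infty$ converts each increment to $h/{\tt m}$ plus a small error, and summing yields ${\tt m}^{-1} V(t)$ modulo an error term. The hypothesis $v(t)/V(t) \to 0$ is exactly what ensures that the boundary contribution from small $s$ (where Blackwell's uniform estimate is not yet accurate, i.e.\ the region $z$ near $t$) is $o(V(t))$. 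In the nonincreasing case, boundedness $v \leq v(0) < \infty$ plays the analogous role: truncation at a large cutoff $N$ and applying the classical key renewal theorem on $[0,N]$ handles the boundary, while a Riemann-sum approximation combined with monotonicity of $v$ on $[N,t]$ yields the main term; letting $N$ grow suitably with $t$ completes the argument.

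The main obstacle is controlling the cumulative Blackwell error across the $O(V(t)/h)$ relevant terms of the Riemann sum: a naive bound gives an error of order $\varepsilon V(t)/h$ which is not automatically $o(V(t))$. The monotonicity of $v$ together with the respective hypothesis (namely, $v(t)/V(t) \to 0$ in the nondecreasing case, or boundedness in the nonincreasing case) supplies exactly the structure needed to force the error to be $o(V(t))$: the former hypothesis suppresses the contribution of the "fast-varying" tail near $s = t$, while the latter suppresses the "slow-decay" contribution at small $s$. In the arithmetic case with span $\delta$, Blackwell's limit formula is replaced by its lattice analogue, and the Riemann-sum mesh must be chosen as a multiple of $\delta$, yielding the additional factor $\delta$ in the asymptotic relation.
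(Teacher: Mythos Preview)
Your approach differs from the paper's. The paper does not argue from first principles: it observes that the corresponding statement for first-passage times of random walks with nonnegative steps is Theorem~4 of \cite{Sgib} and Lemma~5.4 of \cite{Iks2}, and then reduces the subordinator case to this via the identity $\me T(u)=\me N^\ast(u)+\delta_0(u)$, where $N^\ast$ is the first-passage process of the zero-delayed random walk whose generic increment has law $\int_{[0,\infty)}\mmp\{S(t)\in{\rm d}x\}e^{-t}\,{\rm d}t$ (i.e.\ the value of $S$ at an independent standard exponential time). Since $\me T$ and $\me N^\ast$ differ by a bounded term, relation~\eqref{4} transfers immediately, and the arithmetic/nonarithmetic dichotomy is inherited from the law of this increment. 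Your direct Blackwell-based route is more self-contained and would in effect reprove the cited random-walk results; the paper's reduction is shorter only because it delegates the analytic work to the literature.

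Your outline is viable, but the ``main obstacle'' you highlight is not a real one: the mesh $h$ is fixed \emph{before} the Blackwell tolerance $\varepsilon$, so the accumulated error $\varepsilon V(t)/h=(\varepsilon/h)V(t)$ is already $o(V(t))$ once $\varepsilon$ is chosen small relative to $h$. The monotonicity hypotheses are needed for two other purposes: they make the step-function approximation error $\int_{[0,t]}(\bar v^{+}-\bar v^{-})(t-z)\,{\rm d}U(z)$ telescope to a constant times $|v(t)-v(0)|$, and they bound the contribution from the genuinely problematic boundary zone $z\in[0,N]$ (where Blackwell is not yet accurate, corresponding to $s$ near $t$, not small $s$) by $v(t)\,U(N)$ in the nondecreasing case and by $v(0)\,U(N)=O(1)$ in the nonincreasing case. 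With this correction your scheme goes through with $h$ and $N$ both fixed; there is no need to let $N$ grow with $t$.
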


Sgibnev \cite{Sgib} and Iksanov \cite{Iks2} assumed that $T(u)$ is
the first passage time through the level $u$ by a random walk with
nonnegative steps. The transition to the present setting is easy
in the view of $\me T(u)=\me N^\ast(u)+\delta_0(u)$, where
$N^\ast(u)$ is the first passage time through the level $u$ by a
zero-delayed random walk with the generic increment $\xi$ having
the distribution
$$\mmp\{\xi\in {\rm d}x\}=\int_{[0,\,\infty)}\mmp\{S(t)\in
{\rm d}x\}e^{-t}{\rm d}t.$$ It is clear that if the law of $S(1)$
is arithmetic with span $\delta>0$ (respectively, nonarithmetic) then the same
is true for the law of $\xi$.

The next lemma was used in the proof of Theorem \ref{main2}.
\begin{lemma}\label{we}
For $x>0$ and a random variable $V\in (0,1)$,
$$-\int_{[0,\,1]}{1-e^{-y}\over y}{\rm d}y\leq f_1(x)-f_2(x)\leq
\int_{[1,\,\infty)}{e^{-y}\over y}{\rm d}y,$$ where
$$f_1(x):=\int_{[0,\,x]}\me \exp(-e^yV){\rm d}y \ \ \text{and} \ \ f_2(x):=\int_{[0,\,x]}\mmp\{V<e^{-y}\}{\rm d}y.$$
\end{lemma}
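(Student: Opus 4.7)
The plan is to reduce both inequalities to uniform deterministic bounds on a single one-variable function. First I would apply Fubini to $f_1-f_2$ and use the identity $\{V<e^{-y}\}=\{e^yV<1\}$ to get
$$f_1(x)-f_2(x)=\me\left[\int_{[0,\,x]}\bigl(e^{-e^yV}-1_{\{e^yV<1\}}\bigr)\,{\rm d}y\right].$$
For fixed $V\in(0,1)$ the substitution $w=e^yV$, with ${\rm d}y={\rm d}w/w$, turns the inner integral into $G(e^xV)-G(V)$, where $G(w):=\int_{[0,\,w]}(e^{-t}-1_{\{t<1\}})t^{-1}\,{\rm d}t$. This $G$ is well defined and continuous on $[0,\infty)$ because the integrand has finite limit $-1$ at $0$ and is absolutely integrable near $\infty$.

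Next I would exploit the sign of the integrand defining $G$: strictly negative on $(0,1)$, strictly positive on $(1,\infty)$. Consequently $G$ is decreasing on $[0,1]$ and increasing on $[1,\infty)$, with $G(0)=0$, $G(1)=-\int_{[0,\,1]}(1-e^{-t})t^{-1}\,{\rm d}t$, and $G(\infty)-G(1)=\int_{[1,\,\infty)}e^{-t}t^{-1}\,{\rm d}t$. For $v\in(0,1)$ monotonicity gives $G(1)\leq G(v)\leq G(0)=0$, and for every $u>0$ the global-minimum property gives $G(u)\geq G(1)$. Hence $G(e^xV)-G(V)\geq G(1)-0$, which is the claimed lower bound. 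For the upper bound I would split on whether $e^xV\leq 1$ (where $G$ is still decreasing on $[V,1]$, so $G(e^xV)-G(V)\leq 0$) or $e^xV>1$ (where $G(e^xV)-G(1)\leq G(\infty)-G(1)$ and $G(1)-G(V)\leq 0$, summing to $\leq G(\infty)-G(1)$); in both cases $G(e^xV)-G(V)\leq\int_{[1,\,\infty)}e^{-t}t^{-1}\,{\rm d}t$.

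Since both bounds are deterministic constants independent of $V$ and $x$, taking expectation over $V$ yields the lemma. The only routine points to check are Fubini (immediate from $0\leq e^{-e^yV}\leq 1$ and Tonelli for the indicator term) and the finiteness of $G$ at $0$. There is no serious obstacle; the conceptual step is the substitution $w=e^yV$, which collapses a $V$-dependent family of integrals into initial segments of a single universal function $G$ whose range is controlled by elementary monotonicity.
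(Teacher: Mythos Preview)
Your proof is correct and takes a genuinely different route from the paper. The paper introduces an auxiliary standard exponential variable $E$ independent of $V$, rewrites
$f_1(z)=\me\big((|\log V|+\log E)^+\wedge z\big)$ and $f_2(z)=\me(|\log V|\wedge z)$,
and then proves the deterministic inequality $-y^-\le r((x+y)^+)-r(x)\le y^+$ for $r(x)=x\wedge z$ via subadditivity; the bounds $\me\log^+E=\int_{[1,\infty)}e^{-y}/y\,{\rm d}y$ and $\me\log^-E=\int_{[0,1]}(1-e^{-y})/y\,{\rm d}y$ then drop out. Your approach avoids the auxiliary $E$ entirely: the substitution $w=e^yV$ collapses the integrand into a single universal function $G$, and the bounds follow from elementary monotonicity of $G$ on $[0,1]$ and $[1,\infty)$. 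Your argument is more direct and self-contained; the paper's argument, on the other hand, makes transparent why the two constants are exactly $\me\log^\pm E$, which is a pleasant probabilistic interpretation. Both are short; yours requires slightly less machinery.
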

\begin{proof}
For fixed $z>0$ define $r(x)=x\wedge z$, $x\in\mr$. This function
is subadditive on $[0,\infty)$ and nondecreasing. Hence, for
$x\geq 0$ and $y\in\mr$ we have
$$r((x+y)^+)\leq r(x+y^+)\leq r(x)+r(y^+)\leq r(x)+y^+$$ and
\begin{eqnarray*}
r((x+y)^+)-r(x)&\geq& r(x-y^-)-r(x)\\&=&(r(x-y^-)-r(x)) 1_{\{x\leq
z\}}+(r(x-y^-)-r(x))1_{\{x>z,\, x-y^-\leq z\}}\\&=&-y^-1_{\{x\leq
z\}}+(x-y^--z)1_{\{x>z,\, x-y^-\leq z\}}\\&\geq&-y^-1_{\{x\leq
z\}}-y^-1_{\{x>z,\, x-y^-\leq z\}}\\&\geq&-y^-.
\end{eqnarray*}
Thus we have proved that, for $x\geq 0$ and $y\in\mr$
\begin{equation}\label{107}
-y^-\leq r((x+y)^+)-r(x)\leq y^+.
\end{equation}

\noindent Since $f_2(z)=\me (|\log V|\wedge z)$ and
$$f_1(z)=\int_{[0,\,z]}\mmp\{E/V>e^y\}{\rm d}y=\int_{[0,\,z]}\mmp\{|\log
V|+\log E>y\}{\rm d}y=\me ((|\log V|+\log E)^+\wedge z),$$where
$E$ is a random variable with the standard exponential
distribution which is independent of $V$, \eqref{107} entails
$$-\me \log^- E\leq f_1(z)-f_2(z)\leq \me \log^+ E.$$ The proof
is complete.
\end{proof}

\vskip0.5cm \noindent {\bf Acknowledgement}. The authors are
indebted to the referees for helpful comments.

\end{document}